\definecolor{dnrbl}{rgb}{0,0,0.5}
\definecolor{dnrgr}{rgb}{0,0.5,0}
\definecolor{dnrre}{rgb}{0.5,0,0}
\theoremstyle{plain}
\newtheorem{thm}{Theorem}[section]
\newtheorem{lem}[thm]{Lemma}
\newtheorem{coro}[thm]{Corollary}
\numberwithin{equation}{section}
\newcommand{\Nat}{\mathbb{N}}
\newcommand{\restr}{\upharpoonright}  
\newcommand{\un}{\uparrow} 
\newcommand{\de}{\downarrow} 
\newcommand{\ml}{Martin-L\"{o}f }
\begin{document}
\title[Universality, enumerability and prefix-free complexity]{Universal 
computably enumerable sets and initial segment prefix-free complexity}

\author{George Barmpalias}
\address{{\bf George Barmpalias:} 
State Key Lab of Computer Science, 
Institute of Software,
Chinese Academy of Sciences,
Beijing 100190, 
China.}
\email{barmpalias@gmail.com}
\urladdr{\href{http://www.barmpalias.net}{http://www.barmpalias.net}}
\date{{\em First version:} 1 October 2011. {\em This version:} \today}

\thanks{Barmpalias
 was supported by a
research fund for international young scientists
No.\ 611501-10168 and
an {\em International Young Scientist Fellowship} 
number 2010-Y2GB03 from the Chinese Academy of 
Sciences. Partial support was also obtained by
the {\em Grand project: Network Algorithms and Digital Information} of the
Institute of Software, Chinese Academy of Sciences. Special thanks go to the referees
who carefully read a previous draft and suggested many improvements.}
 \keywords{Universal sets, computably enumerable, Kolmogorov complexity, initial segment complexity.}

\begin{abstract} 
We show that there are Turing complete computably
enumerable sets of arbitrarily low non-trivial initial segment prefix-free complexity.
In particular, given any computably enumerable set $A$ with non-trivial 
prefix-free initial segment
complexity, there exists a Turing complete computably enumerable set $B$
with complexity strictly less than the complexity of $A$. 
On the other hand it is known
that sets with trivial initial segment prefix-free complexity are not Turing complete.

Moreover we give a generalization of this result for any finite collection 
of computably enumerable sets $A_i, i<k$ with non-trivial initial
segment prefix-free complexity.
An application of this gives a negative answer to a question
from \cite[Section 11.12]{rodenisbook} and \cite{MRmerstcdhdtd} 
which asked for minimal pairs in
the structure of the c.e.\ reals ordered by their initial segment
prefix-free complexity.

Further consequences concern various notions of degrees of randomness.
For example, the Solovay degrees and the $K$-degrees
of computably enumerable reals and computably enumerable sets
are not elementarily equivalent.
Also, the degrees of randomness of c.e.\ reals based on plain and prefix-free complexity
are not elementarily equivalent; the same holds for the degrees of c.e.\ sets.
\end{abstract}
\maketitle
\section{Introduction}
The interplay between the information that can be coded into
an infinite binary sequence and its initial segment complexity
has been the subject of a lot of research in the last ten years.
A rather influential result 
from \cite{MRtrivrealsH}
that spawned a renewed interest in this area
was that sequences with very easily describable initial segments
cannot compute the halting problem.
Moreover the method that was used to establish it,
often referred to as the decanter method,
was novel and inspired much of the deeper work in this area.
We show that although a universal computably enumerable set
does not have trivial initial segment complexity, it can have
arbitrarily low non-trivial initial segment complexity.
Moreover our method is dual to the decanter method 
and in this sense 
the present paper can be seen as a missing companion to
\cite{MRtrivrealsH}.

We start with a brief overview of Kolmogorov complexity
in Section \ref{subse:kolcomran}
and measures of relative randomness in Section \ref{subse:mearelra}
with a special attention
to the topics around our results.
In Section \ref{subse:trivial} we discuss the class of sequences
with trivial initial segment complexity along with the motivation of
our results, which are presented in Section \ref{subse:resul}.
A number of applications are given in
Section \ref{subse:applic} and Section \ref{subse:otherwred}
discusses connections of the present work with research on other 
reducibilities that are related to Kolmogorov complexity.
In Section \ref{se:prothlowisa} we introduce the main technical tools that
are required for the proofs of our results and Sections
\ref{subse:profthA} and \ref{subse:profthB} contain the proofs of the two main results
respectively.

\subsection{Kolmogorov complexity and randomness}\label{subse:kolcomran}
A standard measure of the complexity of a finite string 
was introduced by
Kolmogorov in \cite{MR0184801}.
The basic idea behind this approach 
is that simple strings have short descriptions relative
to their length
while complex or random strings are hard to describe
concisely.
Kolmogorov formalized this idea using the theory of computation.
In this context, Turing machines play the role of our idealized computing devices, and
we assume that there are Turing machines capable of simulating any mechanical
process which proceeds in a precisely defined and algorithmic manner.
Programs can be identified with binary strings. A string $\tau$ is said to be a
description of a string $\sigma$ with respect to a Turing machine
$M$ if this machine halts when given program $\tau$ and
outputs $\sigma$. Then the Kolmogorov complexity of $\sigma$ with respect to $M$ 
(denoted by $C_M(\sigma)$) is the length
of its shortest description with respect to $M$. 
It can be shown that there exists an \emph{optimal} prefix-free machine $V$, i.e.\
a machine which gives optimal complexity for all strings, up to a certain constant number of bits.
This means that for each Turing machine $M$ there exists a constant $c$
such that $C_V(\sigma)< C_M(\sigma)+c$ for all finite strings $\sigma$.

When we come to consider randomness for infinite strings, it becomes important to
consider machines whose domain satisfies a certain condition;
the machine $M$ is called \emph{prefix-free} if
it has prefix-free domain (which means that no program 
for which the machine halts and gives output is an initial segment of another).
The complexity of a string $\sigma$ with respect to a prefix-free machine $M$ is
denoted by $K_M(\sigma)$.
As with the case of plain Turing machines, there exists an \emph{optimal} 
prefix-free machine $U$.
This means that for each prefix-free machine $M$ there exists a constant $c$
such that $K_U(\sigma)< K_M(\sigma)+c$ for all finite strings $\sigma$.

According to the above discussion, both in the case of plain 
or prefix-free Turing machines the choice of the underlying optimal machine does
not change the complexity distribution significantly.
Hence the theories of plain and prefix-free complexity can be developed without loss of generality,
based on fixed underlying optimal plain and prefix-free machines $V, U$. We let $C=C_V$ and
$K=K_U$.

In order to define randomness for infinite sequences, we consider the complexity of all
finite initial segments. A finite string $\sigma$ is said to be  $c$-{\em incompressible} 
if $K(\sigma)\geq |\sigma|-c$.
Levin \cite{MR0366096}
and Chaitin \cite{MR0411829}  defined an infinite binary sequence  $X$ to be random if
there exists some constant $c$ such that all of its initial segments are $c$-incompressible.
By identifying subsets of $\Nat$ with their characteristic sequence we can also talk about
randomness of sets of numbers.
This definition of randomness of infinite sequences is independent of the choice of
underlying optimal prefix-free machine, and coincides with other definitions of randomness like
 the definition given by \ml in \cite{MR0223179}. 
The coincidence of the randomness notions resulting from various different 
approaches may be seen as evidence of a robust and natural theory.

\subsection{Measures of relative randomness}\label{subse:mearelra}
Once a solid definition of initial segment complexity and randomness is in place, 
it is often desirable to have a way to compare two infinite binary sequences in this respect.
One of the early measures of relative initial segment complexity was developed by
Solovay in \cite{Solovay:75} especially for the computably enumerable (c.e.)
reals. These are binary expansions of the real numbers in the unit interval which are limits
of increasing computable sequences of rationals. The Solovay reducibility
gave a formal way to compare c.e.\ reals with respect to
the difficulty of getting good approximations to them.
Solovay showed in \cite{Solovay:75}
that the induced degree structure has a complete element which
contains exactly the random c.e.\ reals. The Solovay degrees of c.e.\ 
reals where further studied in \cite{DHN,DowneyHL07} (see \cite[Section 9.5]{rodenisbook} for an overview).

Downey, Hirschfeld and LaForte \cite{MR2030512} introduced and studied a number of
other measures of relative initial segment complexity that are not restricted  to the c.e.\ reals.
Most of them are extensions of the Solovay measure of relative complexity.
For example, they defined $A\leq_K B$ if $\exists c\forall n\ (K(A\restr_n)\leq K(B\restr_n)+c)$;
in other words, if the prefix-free complexity of each initial segment of $A$ is bounded by the prefix-free 
complexity of the corresponding initial segment of $B$, modulo a constant.
This reducibility, already implicit in \cite{Solovay:75}, 
is a proper extension of the Solovay reducibility on the c.e.\ reals
and was further studied  in \cite{DingyuDow, milleryutran, milleryutran2}
with a special attention to random sequences and in \cite{MR2199198, MRmerstcdhdtd, BV2010},
\cite[Section 5]{Barstris}
with more focus on local properties. A lot of these results refer to
the degree structure that is induced by $\leq_K$, the $K$-degrees.
A version of $\leq_K$ for plain Kolmogorov complexity
was also defined in \cite{MR2030512}, which induces the structure of the $C$-degrees.
In particular, $A\leq_C B$ if $\exists d\forall n\ (C(A\restr_n)\leq C(B\restr_n)+d)$.

\subsection{Trivial initial segment complexity and Turing degrees}\label{subse:trivial}
A string $\sigma$ that has  prefix-free complexity as low as
the prefix-free complexity of the sequence of 0s of the same length may be regarded as trivial.
Indeed, if we consider the prefix-free complexity of a string as a measure of the information 
that is coded in the string, in this case there is no information coded in the bits of the sequence.
The infinite sequences whose initial segments have trivial prefix-free complexity
are known as the $K$-trivial sequences. Formally, $X$ is $K$-trivial
if $\exists c\forall n\ (K(A\restr_n)\leq K(n)+c)$, where we may identify $K(n)$ with
$K(0^n)$.
Surprisingly, there are noncomputable $K$-trivial sequences and this was 
already proved in \cite{Solovay:75}.
Note that the $K$-trivial sequences are the contents of the least element
in the $K$-degrees that were discussed in Section \ref{subse:mearelra}.

An interesting question that motivated a lot of later research
was the following.
\begin{equation}\label{eq:quesmkthal}
\parbox{9cm}{How much information can be encoded in an infinite binary sequence
with very simple initial segments?}
\end{equation}
In particular, is it possible to encode a Turing complete problem into a
$K$-trivial sequence.
A particularly simple construction of a noncomputable $K$-trivial c.e.\ set
that was presented in \cite{MRtrivrealsH} made this possibility
plausible. However in the same paper it was shown that this is not the case.
In particular, if an oracle $A$ computes the halting problem then for each
constant $c$ there are initial segments $\sigma$ of $A$ such that $K(\sigma)>K(|\sigma|)+c$.
The proof of this result was quite novel, and along with its extensions
it became known as {\em the decanter method}.
Hirschfeldt and Nies extended this method in \cite{MR2166184}
and showed that the amount of information that can be coded into $K$-trivial
sequences is in fact quite limited. Quite interestingly, they also showed that
$K$-triviality is downward closed with respect to Turing reductions.
We refer to \cite[Section 11.4]{rodenisbook} and \cite[Section 5]{Ottobook}
for detailed presentations of the decanter method.

\subsection{Motivation and results}\label{subse:resul}
In this paper we revisit question (\ref{eq:quesmkthal})
by examining the possibility of coding considerable information
in an infinite sequence with initial segments of very low but not
necessarily trivial prefix-free complexity.
We initially focus in the special case of c.e.\ sets,
where Turing completeness provides a notion of
maximality of information that can be coded.
Hence we may ask the following question.
\begin{equation}\label{eq:quescomplrel}
\parbox{9cm}{How low can the initial segment prefix-free complexity of a Turing complete
computably enumerable set be?}
\end{equation}

How can we qualify  the notion of 
`low initial segment complexity' in question 
(\ref{eq:quescomplrel})?
Note that modulo an additive constant, 
$K(n)$ is a lower bound on the complexity of the first $n$
bits of any infinite sequence.
Since the $K$-trivial sequences are ruled out by the result in
 \cite{MRtrivrealsH}, we turn our attention to sequences whose initial
 segment prefix-free complexity may deviate from the lower bound $K(n)$
 but is still quite low. One way we could try to make this lowness condition precise
 is to look among sequences $A$ such that $K(A\restr_n)-K(n)$ is bounded 
 from above by a very
 slow growing function $g$, as it is shown in (\ref{eq:gapfun}).
 \begin{equation}\label{eq:gapfun}
 \exists c\forall n\ (K(A\restr_n)\leq K(n)+g(n)+c)
 \end{equation}
 The notion of `slow growing' may be quantified through the arithmetical
 hierarchy of complexity. For example there are $\Delta^0_3$ 
 unbounded nondecreasing functions that are dominated by all $\Delta^0_2$
 functions with the same properties. In this sense, as the  rate
 of growth of a function is reduced (but remains nontrivial)
 the arithmetical complexity of it increases.
Let us first consider nondecreasing functions $g$.
 In \cite{BienvenuMN11, Baarsbarmp} it was shown that if $g$ is 
 nondecreasing, unbounded and $\Delta^0_2$ then there is a large 
 uncountable collection of oracles $A$ that satisfy (\ref{eq:gapfun}).
 Hence a class that includes functions with these properties
 is not sufficiently restrictive for our purpose and we need to look
 in higher complexity classes.
 On the other hand in \cite{MR2199198, Baarsbarmp}
 it was shown that there are nondecreasing unbounded
 functions $g$ in $\Delta^0_3$
 such that any set $A$ that satisfies
 (\ref{eq:gapfun}) is $K$-trivial. 
 Moreover allowing functions that may decrease occasionally
 introduces similar problems. For example, 
 it was shown in \cite[Section 5]{BV2010}
 that there is a $\Delta^0_2$ function $g$ such that $\lim_n g(n)=\infty$ and
 any c.e.\ set which satisfies (\ref{eq:gapfun}) is $K$-trivial.
 Hence condition (\ref{eq:gapfun}) in combination with standard
 ways to quantify the rate of growth of the function $g$ is not a fruitful way
 to formalize the notion of `low nontrivial initial segment complexity'.
 
 Another approach is to compare the initial segment complexity of
 a c.e.\ set with the complexity of other sets.
 Although this would not give us an absolute notion of low nontrivial complexity,
 an answer of the type `lower than the complexity of any sequence with nontrivial complexity'
 to the question (\ref{eq:quescomplrel})
 would be definitive. The existence of minimal $K$-degrees is an open problem,
 but since this question refers to c.e.\ sets, such a positive answer is
 still not possible.
 Indeed, it was shown in \cite{BV2010} that there is a $\Delta^0_2$ set $B$ which is not $K$-trivial
 but every c.e.\ set with $A\leq_K B$ is $K$-trivial. In other words the 
 initial segment complexity of $B$ does not bound the complexity of any c.e.\ set with nontrivial
 initial segment complexity.
 This shows that the comparison needs to involve the complexities of c.e.\ sets and not 
 arbitrary sequences. In this sense, the best possible answer to question
 (\ref{eq:quescomplrel}) would be the existence of Turing complete c.e.\ sets
 with initial segment complexity strictly lower than the complexity of any given c.e.\ set
 that is not $K$-trivial. Our first result establishes exactly this.
 \begin{thm}\label{th:lowiscom}
Let $A$ be a computably enumerable set which is not $K$-trivial.
Then there exists a computably enumerable set $B$
such that $B\equiv_T\emptyset'$ and $B<_K A$.
\end{thm}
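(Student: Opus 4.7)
My plan is to construct $B$ by a stage-by-stage c.e.\ construction, simultaneously building a prefix-free machine $M$ whose descriptions will witness $B\leq_K A$ via the machine existence (Kraft--Chaitin) theorem, while coding $\emptyset'$ into $B$ through a sparse sequence of followers $x_e$ to force Turing completeness. At each stage $s$ and each position $n$ at which either $B_s\restr_n$ or the approximation $K_s(A_s\restr_n)$ has just changed, $M$ enumerates a request of code length $K_s(A_s\restr_n)$ for the string $B_s\restr_n$. Provided the cumulative weight of such requests is finite, this yields $K(B\restr_n)\leq K(A\restr_n)+O(1)$.

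For Turing completeness I would enumerate each follower $x_e$ into $B$ precisely when $e$ enters $\emptyset'$, so that $\emptyset'(e)=B(x_e)$. The marginal cost of such an enumeration at its stage $s_e$ is $\sum_{n>x_e}2^{-K_{s_e}(A_{s_e}\restr_n)}$; by placing $x_e$ in a sufficiently deep computable block this can be forced below $2^{-e}$, and summing over $e$ the total coding cost is at most $1$. The remaining cost, from successive $K_s$-approximation refinements at each fixed $n$, contributes $O(2^{-K(A\restr_n)})$ per position; summing over $n$ gives $O(\sum_n 2^{-K(A\restr_n)})$, which converges because $K(A\restr_n)\geq K(n)-O(1)$ and $\sum_n 2^{-K(n)}\leq\Omega<\infty$. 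Hence the total $M$-weight is finite.

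The strict inequality $A\not\leq_K B$ is secured by interleaving diagonalization requirements $R_c$ that each ask for a witness position $n$ with $K(A\restr_n)>K(B\restr_n)+c$. Here the hypothesis that $A$ is not $K$-trivial enters essentially: it supplies the gap $\limsup_n(K(A\restr_n)-K(n))=\infty$, and by arranging for few followers to lie below the chosen witness $n$ one keeps $K(B\restr_n)$ close to $K(n)$ (since $B\restr_n$ is then determined by a short initial segment of $\emptyset'$), so that at suitable witnesses $K(A\restr_n)-K(B\restr_n)$ exceeds any prescribed $c$.

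The principal obstacle is coordinating these competing demands simultaneously: bounded $M$-weight, prompt coding of each $e\in\emptyset'$, and the preservation of a growing complexity gap at the $R_c$ witnesses. The technical framework developed in Section~\ref{se:prothlowisa}, a construction dual to the decanter method, is what enables this coordination, allowing the non-$K$-triviality of $A$ to serve both as the convergence resource underwriting the coding argument and as the source of complexity gaps sustaining the diagonalization.
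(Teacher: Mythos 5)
Your coding argument has a genuine gap at the crucial step of bounding the weight of $M$. You claim that the cost
\[
\sum_{n>x_e}2^{-K(A\restr_n)[s_e]}
\]
of re-describing $B\restr_n$ for $n>x_e$ at the stage $s_e$ when $e$ enters $\emptyset'$ can be forced below $2^{-e}$ by "placing $x_e$ in a sufficiently deep computable block." This is false. The follower $x_e$ is chosen at some stage $t_e$, but $e$ enters $\emptyset'$ at some unpredictable later stage $s_e$, and between $t_e$ and $s_e$ the universal machine $U$ may enumerate arbitrarily much new weight on descriptions of segments $A_{s_e}\restr_n$ with $n>x_e$. In the worst case the tail $\sum_{n>x_e}2^{-K(A\restr_n)[s_e]}$ comes close to $\mathtt{wgt}(U)$ regardless of how deep $x_e$ was placed, because the bound $\sum_n 2^{-K(A\restr_n)[t_e]}$ at the earlier stage tells you nothing about the later stage. (Neither does the convergence of the limiting series $\sum_n 2^{-K(A\restr_n)}$, since that limit is not computable from $A$'s approximation, and the approximation can look very non-$K$-trivial on any finite window while remaining globally bounded.) This is exactly the ``main conflict'' the paper isolates in Section~\ref{sssec:mainconfl}; a cascade of coding enumerations with static followers leads to an unbounded $M$-weight. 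Your scheme as written is the crude construction the paper explicitly argues cannot work.

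The paper's resolution is essentially absent from your proposal: the coding markers must be allowed to move \emph{many times} before their index enters $\emptyset'$, each movement being triggered by a carefully calibrated threshold $q_i$ measured against $U$'s current weight on long segments of $A$. The number of movements is then bounded by running, for each marker $m_i$, an auxiliary prefix-free machine $N_i$ that would witness $A$ being $K$-trivial if $m_i$ moved forever. Thus the non-$K$-triviality of $A$ is the convergence resource for the coding itself, consumed through the $N_i$'s; you gesture at this in your last paragraph, but the mechanism your argument actually relies on (deep placement of a follower that is only enumerated once) makes no use of it and does not work. Separately, note that the paper does not obtain $B<_K A$ by direct diagonalization as you propose: it only builds $B\leq_K A$ and then invokes the known downward density of the c.e.\ $K$-degrees (cited from \cite[Section 5]{Barstris}) to pass to a strict predecessor. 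Your interleaved requirements $R_c$ would compete directly with the coding for control of $K(B\restr_n)$, which is an extra coordination problem your proposal does not address; the density result is the cleaner route.
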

The proof of Theorem \ref{th:lowiscom} involves a very sparse coding
of complete information, which produces a sequence with very simple
initial segments, in the sense of the prefix-free complexity.
A crucial part of the argument is the exploitation of 
the fact that the given set is c.e.\ and has nontrivial initial segment
prefix-free complexity. In this sense Theorem \ref{th:lowiscom} is dual to the main result of
\cite{MRtrivrealsH} that $K$-trivial sets are incomplete. More
generally, the decanter method that was developed in 
\cite{MRtrivrealsH} is a tool for exploiting the lack of complexity of a set
in order to deduce additional properties. 
 The method used in the proof of
 Theorem \ref{th:lowiscom} is a tool
 for exploiting the complexity of a sequence 
 (in combination with an effective approximation to it)
 in order to absorb the complexity of a coding
 procedure. In this sense the two methods are dual.
 
 It is instructive to compare Theorem \ref{th:lowiscom}
 with condition (\ref{eq:gapfun}). If we wish to express
 our result in these terms we can set $g(n)=K(A\restr_n)-K(n)$.
 We note that $g(n)$ will be occasionally decreasing.
 In fact, it is well known that for every c.e.\ set $A$ the
 $\liminf(K(A\restr_n)-K(n))$ is finite. In other words, c.e.\ sets
 are infinitely often $K$-trivial (see \cite[Section 2]{BV2010} for a proof
 and a general discussion about infinitely often $K$-trivial sets). 
 This observation gives some idea about the challenges
 of implementing the coding that is required in Theorem \ref{th:lowiscom}
 as well as the qualification of the idea of `low initial segment complexity'
 for c.e.\ sets.
 
 Our second result is a generalization of Theorem \ref{th:lowiscom}
 to any finite collection of c.e.\ sets with nontrivial initial segment
 prefix-free complexity. We state it and prove it for the special 
 case of two c.e.\ sets since
 the more general version may be obtained trivially
 and effectively by an iterated application.
 \begin{thm}\label{th:lowismulticom}
Let $A, D$ be computably enumerable sets which are not $K$-trivial.
Then there exists a computably enumerable set $B$
such that $B<_K A$, $B<_K D$ and $B\equiv_T\emptyset'$.
\end{thm}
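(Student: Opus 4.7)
The plan is to adapt the dual-decanter construction from Theorem~\ref{th:lowiscom} so that a single c.e.\ enumeration of $B$, with a single sparse coding of $\emptyset'$, is certified by two prefix-free machines $M_A$ and $M_D$. The machine $M_A$ is built from the c.e.\ approximation to $A$ exactly as in Theorem~\ref{th:lowiscom}: whenever $K_s(A\restr_n)$ drops, $M_A$ enumerates a fresh description of the current $B_s\restr_n$ of matching length. The machine $M_D$ plays the identical role with respect to the approximation of $D$. Each of these machines operates within its own finite Kraft budget, bounded by a constant multiple of $\sum_n 2^{-K(A\restr_n)}$ respectively $\sum_n 2^{-K(D\restr_n)}$, both of which converge since $K(n)\leq K(X\restr_n)+O(1)$ for any real $X$ and $\sum_n 2^{-K(n)}$ is bounded by Kraft's inequality. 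If the construction of $B$ respects both budgets then $K(B\restr_n)\leq K(A\restr_n)+O(1)$ and $K(B\restr_n)\leq K(D\restr_n)+O(1)$ hold simultaneously.

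To code the $e$-th bit of $\emptyset'$ we reserve a sparse coding location $n_e$ and refrain from acting there until the approximations exhibit \emph{joint} compression events for $A\restr_{n_e}$ and $D\restr_{n_e}$ large enough that both $M_A$ and $M_D$ have unused Kraft to describe the new value of $B_s\restr_{n_e}$. The permitting by $A$ alone used in Theorem~\ref{th:lowiscom} is thereby replaced by a joint permitting by $A$ and $D$; since the coding locations $n_e$ are chosen effectively and the value written at $n_e$ tracks $\emptyset'(e)$, Turing completeness $B\equiv_T\emptyset'$ is inherited directly from the single-set argument. The strict $K$-inequalities $B<_K A$ and $B<_K D$ come with the same package: for each candidate constant $c$ we add a requirement demanding a position $n$ where $K(A\restr_n)$ (respectively $K(D\restr_n)$) exceeds $K(B\restr_n)+c$, and such positions are produced by the same sparseness of the coding that drove the single-set argument, using that $A$ and $D$ are not $K$-trivial.

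The principal obstacle, and the only genuine novelty over Theorem~\ref{th:lowiscom}, is coordinating the joint permitting step: instead of one compression event in $A$ of the right magnitude, we now need compression events in $A$ and in $D$ at essentially the same location and stage. The key fact to exploit is that applying the remark after Theorem~\ref{th:lowiscom}---namely, that every c.e.\ set is infinitely often $K$-trivial along its length---to the effective join $A\oplus D$ yields infinitely many $n$ at which both $K_s(A\restr_n)$ and $K_s(D\restr_n)$ come within $O(1)$ of $K(n)$ at some stage $s$, creating an abundance of joint windows for coding. The technical heart of the proof is then a bookkeeping argument showing that, despite the longer waits and the potentially many intermediate corrections to $B_s\restr_{n_e}$ before such a window is realised, the cumulative Kraft expenditure in $M_A$ and $M_D$ remains finite.
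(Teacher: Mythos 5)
Your proposal diverges from the paper's argument at the crucial step, and the divergence introduces a genuine gap.

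The single-set bound on $\mathtt{wgt}(M)$ in Theorem~\ref{th:lowiscom} (Lemma~\ref{eq:bfolocoAnsk}) rests on the fact that when marker $m_n$ moves at stage $x+1$, it did \emph{not} require attention at stage $x$, and therefore
$\sum_{j>m_n[x]} 2^{-K(A\restr_j)[x]} < q_n[x]$. This strict upper bound on the weight of reused descriptions, shrinking as $t_n$ climbs, is what yields a convergent sum over all moves of $m_n$. Your proposal replaces the single permitting condition with a conjunctive one: only move $m_n$ once \emph{both} the $A$-side and the $D$-side thresholds are exceeded. The trouble is that the $A$-threshold might first be reached at stage $x_0$ while the $D$-threshold is not reached until a much later stage $x_1$. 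During the waiting interval $(x_0, x_1]$, the sum $\sum_{j>m_n[x]} 2^{-K(A\restr_j)[s]}$ can continue to grow far beyond $q^a_n$, because the approximations to $A$ and $K$ keep changing. When $m_n$ finally moves, the weight of $M_a$-descriptions re-issued is controlled only by this inflated sum, not by the original threshold $q^a_n$, and the geometric-series argument collapses. The same happens symmetrically for $M_d$. Nothing in your proposal supplies a replacement bound on these overruns; in fact this uncontrolled accumulation is precisely the ``lack of uniformity'' the paper identifies as the core difficulty. The paper resolves it by moving $m_i$ as soon as \emph{either} side permits, while carrying the unpaid overrun of the non-permitting side as a debt $p^a_i$ or $p^d_i$ that is subtracted from the threshold at subsequent stages. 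Once the debt side finally permits and the debt is discharged, the total $N^x_i$ weight still corresponds to pairwise-disjoint chunks of $\mathtt{wgt}(U)$. Your joint-permitting design has no analogue of this accounting.

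Your appeal to the infinitely-often-$K$-triviality of $A\oplus D$ does not rescue the plan. That observation produces infinitely many $n$ where \emph{both} $K(A\restr_n)$ and $K(D\restr_n)$ are within $O(1)$ of $K(n)$---that is, lengths at which both sets look trivial. Those are the places where the permitting weight is smallest, not largest, so they do not furnish the joint windows you need. What the construction actually requires is lengths where the complexities of $A$ and $D$ simultaneously deviate from $K(n)$; this is exactly the content of Corollary~\ref{coro:exactpair}, which is a \emph{consequence} of Theorem~\ref{th:lowismulticom}, not a tool one may invoke in its proof. Finally, the strict inequalities $B<_K A$ and $B<_K D$ are not obtained by scattering ad-hoc diagonalization requirements as you suggest; the paper constructs $B\leq_K A$, $B\leq_K D$ and then invokes downward density of the c.e.\ $K$-degrees to pass to a strictly smaller complete set, a small but real difference of structure.
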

\noindent
 This extension has several applications that are discussed
 in Section \ref{subse:applic}, including the solution
 to an open question from \cite[Section 11.12]{rodenisbook}.
Moreover its proof goes considerably beyond
a routine adaptation of the special case established in
Theorem \ref{th:lowiscom}. As we elaborate in Section
\ref{subse:lackunifsol} the main obstacle is the lack of uniformity in the
complexities of the given c.e.\ sets.
 This can be better understood if we recall that $K$-trivial sets
 are infinitely often $K$-trivial.
 In particular, as we discuss in Section \ref{subse:applic},
 Theorem \ref{th:lowismulticom} shows that if two c.e.\ sets $A,D$ are 
 not $K$-trivial their initial segment complexity must rise simultaneously
 on some lengths. Hence despite the potential lack of uniformity in the oscilations
 of the complexity of two c.e.\ sets, there must be some uniformity on a local level i.e.\
 places where the complexities $K(A\restr_n), K(D\restr_n)$
 deviate from $K(n)$ simultaneously.

Finally, we would like to mention another approach that has been used in the
recent work by Ian Herbert with regard to reals of low initial segment complexity.
Let $K^A$ denote the prefix-free complexity with respect to oracle $A$.
Herbert studied the class of reals $A$ such that $K(n)\leq K^A(n)+f(n)+c$ for all $n$,
 where $c$ is a constant and $f$ is a slow growing function.
 This class is also a proper extension of the $K$-trivial reals.
 
\subsection{Applications}\label{subse:applic}
The first application concerns various local structures of the $K$-degrees.
The existence of minimal pairs of $K$-degrees was established in
\cite{MR2199198}, where two $\Delta^0_4$ sets forming a minimal pair
in this structure were constructed.
In \cite{MRmerstcdhdtd} a minimal pair of $\Sigma^0_2$ sets was 
presented  and in \cite[Section 3]{BV2010}
it was shown that there is a $\Sigma^0_2$ set that forms a minimal pair with all
$\Sigma^0_1$ sets in the $K$-degrees.
Theorem \ref{th:lowismulticom} implies that there are no
minimal pairs  in the structure of the $K$-degrees of c.e.\ sets. In
particular, there is no pair of $\Sigma^0_1$ sets that form a minimal pair of $K$-degrees.
This complements the existence results for minimal pairs in the $K$-degrees.

Downey and Hirschfeldt \cite[Section 11.12]{rodenisbook} 
as well as Merkle and Stephan \cite{MRmerstcdhdtd} asked if there is a
pair of c.e.\ reals that form a minimal pair in the $K$-degrees.
This question is particularly interesting since $\leq_K$ is often introduced
as a generalization of the Solovay reducibility, which is the standard measure of relative randomness
on the class of c.e.\ reals.
We show that Theorem \ref{th:lowismulticom} answers this question in the negative.
We need the following fact.
\begin{lem}\label{le:c.erealase}
If $A$ is a c.e.\ real such that $\emptyset<_K A$ then there exists
a c.e.\ set $B$ with $\emptyset <_K B\leq_K A$.
\end{lem}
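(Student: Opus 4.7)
The plan is to realize $A$ as a series $A = \sum_{n \in B} 2^{-n}$ for a c.e.\ set $B$ of positive integers, and then transfer the $K$-degree of $A$ to the characteristic sequence $\chi_B$. Fix a computable nondecreasing sequence of dyadic rationals $A_s \nearrow A$, slowed if necessary so that $A_{s+1} - A_s \leq 2^{-s-1}$. I will build $B$ stagewise while maintaining the invariant that at the end of stage $s$ the partial sum $\sum_{n \in B_s} 2^{-n}$ lies in $[A_s - 2^{-s}, A_s]$; at each stage I greedily add elements of smallest possible index to close the gap to within $2^{-s}$. The subtlety is handling ``carries'': an increment of $A$ that overruns a bit-position already placed in $B$ cannot be undone, but it can be absorbed by enumerating a tail of high-index elements, using the identity $\sum_{k>m} 2^{-k} = 2^{-m}$. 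With the approximation slow enough, the construction can be carried out so that $\sum_{n \in B} 2^{-n} = A$ in the limit.

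To show $\chi_B \leq_K A$, given $A\restr_n$ I locate the least stage $s^* \geq n$ at which $A_{s^*} \geq 0.(A\restr_n)$; this is uniformly computable from $A\restr_n$. At $s^*$ one has $A - A_{s^*} \leq 2^{-n}$, and the invariant gives $A - \sum_{k \in B_{s^*}} 2^{-k} \leq 2 \cdot 2^{-n}$. Any subsequent enumeration into $B$ of an element $m \leq n$ would contribute $2^{-m} \geq 2^{-n}$ to the partial sum, so at most $O(1)$ such late additions are possible and they can be detected by a bounded amount of further simulation. Hence $B \cap \{1, \dots, n\}$ is uniformly computable from $A\restr_n$, yielding $K(\chi_B \restr_n) \leq K(A\restr_n) + O(1)$.

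The reverse inequality $A \leq_K \chi_B$ is essentially free from the identity $A = \sum_{n \in B} 2^{-n}$: from $\chi_B \restr_n$ one reads off $B \cap \{1, \dots, n\}$, forms $q = \sum_{k \in B,\, k \leq n} 2^{-k}$, and notes that $A \in [q, q + 2^{-n}]$, which pins down $A\restr_{n - O(1)}$ up to a constant. Combined with the previous paragraph this gives $\chi_B \equiv_K A$, and the non-$K$-triviality of $\chi_B$ follows from that of $A$. The main obstacle lies in the construction of $B$: one must verify that the carry-absorption mechanism genuinely achieves $\sum_{n \in B} 2^{-n} = A$ without ever over-committing an index to $B$, and that the invariant on $B_s$ can be maintained through stages where the greedy algorithm is forced to enumerate long high-index tails to compensate for a large carry.
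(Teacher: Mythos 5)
Your plan requires a c.e.\ set $B$ with $\sum_{n\in B}2^{-n}=A$ exactly, i.e.\ a computably enumerable binary expansion for $A$. This is impossible for a general c.e.\ real: the reals admitting a c.e.\ binary expansion are the \emph{strongly} c.e.\ reals, a proper subclass. Chaitin's $\Omega$ is a concrete obstruction --- it is c.e.\ and far from $K$-trivial (so the lemma must apply to it), but its bit sequence is Martin-L\"of random, and no c.e.\ set is Martin-L\"of random; hence no c.e.\ $B$ satisfies $\sum_{n\in B}2^{-n}=\Omega$. The carry-absorption device $\sum_{k>m}2^{-k}=2^{-m}$ does not repair this: absorbing a carry commits a cofinal tail of indices to $B$ and leaves no unused index of the right magnitude when $A$ later increases again, so the greedy invariant cannot be sustained. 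Your closing caveat is pointing at a genuine failure, not a technicality to be checked.

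The paper avoids the difficulty by not attempting to encode the final bits of $A$ in $B$ at all. It takes a monotone computable approximation $(A_s)$ with the property that a $1\to 0$ flip of $A(n)$ is always accompanied by a $0\to 1$ flip at some smaller position, so each $A(n)[s]$ changes at most $2^n$ times; it then has $B$ record, in a dedicated block of $2^n$ positions, a running tally of how often $A(n)$ has changed. A tally only increases, so $B$ is honestly c.e.\ with no carry problem. Since $0.A_s\restr_n$ is nondecreasing in $s$, from $A\restr_n$ one can wait until $A_s\restr_n$ stabilizes and read off the tallies for every $k$ with $2^{k+1}\leq n$; thus $B\leq_T A$ with use at most $n$, which gives $B\leq_K A$. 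The converse $A\leq_T B$ is immediate from the tallies' parities, and because $K$-triviality is closed downward under $\leq_T$, it follows that $\emptyset<_K B$.
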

\begin{proof}
Since $A$ is a c.e.\ real, it has a computable approximation $(A_s)$
according to which if $A(n)[s]=1$ and $A(n)[s+1]=0$
then there is some $i<n$ such that 
$A(i)[s]=0$ and $A(i)[s+1]=1$. A canonical encoding of the approximation
$(A_s)$ into a c.e.\ set $B$ can be achieved
based on the fact that for each $n$ the value of $A(n)[s]$
can only change at most $2^n$ times during the stages $s$.
The first bit of $B$ encodes the oscillations to $A(0)$,
the next $2$ bits encode $A(1)$, the next $2^2$ bits encode $A(2)$
and so on. In particular if $A(k)$ is encoded in the bits $(m, m+2^{k}]$
of $B$, upon each change in
$A(k)[s]$ during the stages $s$ we enumerate into $B$ the largest
element of $(m, m+2^{k}]$ that is not yet in $B$.
In this way we have $A\equiv_T B$ and $B\leq_T A$ through a
Turing reduction that uses at most $n$ bits of $A$ in the computation of
$n$ bits of $B$. Since $K$-triviality is a degree-theoretic property we have
$\emptyset <_K B$ and by the basic properties of $\leq_K$ on the c.e.\ reals
we also have $B\leq_K A$.
\end{proof}
\noindent
By Theorem \ref{th:lowismulticom} and
Lemma \ref{le:c.erealase} we get the desired result about minimal pairs.
\begin{coro}\label{coro:nomincere}
There are no minimal pairs in the $K$-degrees of c.e.\ reals.
\end{coro}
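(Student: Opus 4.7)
The plan is to combine Theorem \ref{th:lowismulticom} with Lemma \ref{le:c.erealase}, using the fact from \cite{MRtrivrealsH} recalled in Section \ref{subse:trivial} that $K$-trivial sets are Turing incomplete. Given two c.e.\ reals $\alpha, \beta$ which are not $K$-trivial (i.e., $\emptyset<_K\alpha$ and $\emptyset<_K\beta$), I want to produce a c.e.\ real $B$ with $\emptyset<_K B$, $B\leq_K\alpha$ and $B\leq_K\beta$, which will show $\alpha,\beta$ is not a minimal pair in the $K$-degrees of c.e.\ reals.

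First I would apply Lemma \ref{le:c.erealase} separately to $\alpha$ and to $\beta$, obtaining c.e.\ sets $A$ and $D$ with $\emptyset<_K A\leq_K\alpha$ and $\emptyset<_K D\leq_K\beta$. This reduces the problem from the c.e.\ reals to the c.e.\ sets, where Theorem \ref{th:lowismulticom} is available. Then I would apply Theorem \ref{th:lowismulticom} to the pair $(A,D)$ to obtain a c.e.\ set $B$ with $B<_K A$, $B<_K D$, and $B\equiv_T\emptyset'$.

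To finish, two small observations suffice. On the one hand, $B$ is Turing complete, so by the Hirschfeldt--Nies result quoted in Section \ref{subse:trivial} (the main theorem of \cite{MRtrivrealsH}) it cannot be $K$-trivial; hence $\emptyset<_K B$. On the other hand, a c.e.\ set is canonically a c.e.\ real via its characteristic sequence, since the stagewise approximation is monotone nondecreasing as elements are enumerated. Chaining the reducibilities gives $B\leq_K A\leq_K\alpha$ and $B\leq_K D\leq_K\beta$, so $B$ is a c.e.\ real witnessing that $\alpha$ and $\beta$ do not form a minimal pair.

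There is no real obstacle here: all the combinatorial content has been absorbed into Theorem \ref{th:lowismulticom} and Lemma \ref{le:c.erealase}, and the argument is just a two-step reduction (reals $\to$ sets $\to$ common lower bound) together with invoking Turing completeness to rule out $K$-triviality of $B$.
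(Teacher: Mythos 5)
Your proposal is correct and is exactly the argument the paper intends: the paper derives the corollary in one line by citing Theorem \ref{th:lowismulticom} together with Lemma \ref{le:c.erealase}, and your write-up simply fills in the obvious chain (reals $\to$ c.e.\ sets via the lemma, common lower bound via the theorem, non-$K$-triviality of the lower bound via its Turing completeness and the incompleteness of $K$-trivials).
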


The separation of Solovay reducibility from $\leq_K$ on the c.e.\ reals
was already achieved in \cite{MR2030512}, where
a pair of c.e.\ reals $A,B$ was constructed such that $A\leq_K B$ 
but $A$ is not Solovay reducible to $B$. However 
these examples are artificial since they were obtained via diagonalization.
A more natural separation would be obtained by an elementary difference 
in the corresponding degree structures of c.e.\ reals. 
 This is provided by the existence of minimal pairs
which occurs in the Solovay degrees of c.e.\ reals by \cite{MR2030512}
but not in the $K$-degrees of c.e.\ reals by Corollary \ref{coro:nomincere}.
 The same holds for c.e.\ sets according to Theorem \ref{th:lowismulticom}.

\begin{coro}
The structures of the Solovay degrees and the $K$-degrees of
computably enumerable reals are not elementarily equivalent.
Moreover the same holds for the Solovay degrees and the $K$-degrees 
of computably enumerable sets.
\end{coro}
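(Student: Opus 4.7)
The plan is to exhibit a single first-order sentence in the language $\{\leq, \mathbf{0}\}$ that holds in one structure and fails in the other. The obvious candidate is the sentence asserting the existence of a minimal pair above the bottom element, namely
\[
\phi \;\equiv\; \exists x\,\exists y\,\bigl(x\neq \mathbf{0}\,\fand\, y\neq \mathbf{0}\,\fand\, \forall z\,((z\leq x\fand z\leq y)\rightarrow z=\mathbf{0})\bigr),
\]
where $\mathbf{0}$ stands for the least degree in the relevant structure (the computable reals in the Solovay degrees, the $K$-trivial reals in the $K$-degrees). Since this sentence is clearly first-order and the intended bottom element is definable (it is the unique least element of each poset), showing that $\phi$ holds on one side and fails on the other immediately yields non-elementary-equivalence.

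For the c.e.\ reals, I would recall from \cite{MR2030512} that there exist minimal pairs in the Solovay degrees, so $\phi$ holds in that structure. On the other hand, Corollary \ref{coro:nomincere} shows that there are no minimal pairs of $K$-degrees of c.e.\ reals, so $\phi$ fails in the $K$-degree structure. Hence the two structures are not elementarily equivalent.

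For the c.e.\ sets, the argument is symmetric. Minimal pairs in the Solovay degrees of c.e.\ sets are inherited from the c.e.\ reals: any two c.e.\ sets $A,D$ forming a minimal pair in the Solovay degrees of c.e.\ reals (which can be obtained for instance via the construction from \cite{MR2030512}, or by noting that characteristic sequences of c.e.\ sets are in particular c.e.\ reals) witness $\phi$. Conversely, Theorem \ref{th:lowismulticom} directly rules out minimal pairs in the $K$-degrees of c.e.\ sets: given any two c.e.\ sets $A,D$ that are not $K$-trivial, Theorem \ref{th:lowismulticom} produces a c.e.\ set $B$ with $\emptyset<_K B$, $B\leq_K A$ and $B\leq_K D$, so the $K$-degrees of $A$ and $D$ cannot be a minimal pair. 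Thus $\phi$ fails in the $K$-degrees of c.e.\ sets, giving the second non-equivalence.

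The only mild obstacle is cosmetic: one must make sure the bottom element is definable in the respective first-order language so that $\phi$ really is a sentence rather than a formula with a distinguished constant. This is immediate, since in each of the four structures the least element is the unique element below all others and hence $\emptyset$-definable in $\{\leq\}$. Once this is observed, the corollary is just a formal restatement of the minimal pair results for the Solovay degrees together with Corollary \ref{coro:nomincere} and Theorem \ref{th:lowismulticom}.
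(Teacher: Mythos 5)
Your proposal is correct and takes essentially the same route as the paper: both proofs derive the non-equivalence from the existence of minimal pairs in the Solovay degrees (from \cite{MR2030512}, and inherited by the c.e.\ sets) against their non-existence in the $K$-degrees (Corollary~\ref{coro:nomincere} for c.e.\ reals, Theorem~\ref{th:lowismulticom} for c.e.\ sets). The explicit first-order sentence $\phi$ and the remark that the least element is $\emptyset$-definable in $\{\leq\}$ are details the paper leaves implicit, but they do not change the substance of the argument.
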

\noindent
Merkle and Stephan showed in \cite{MRmerstcdhdtd} that there
exist two c.e.\ sets that from a minimal pair with respect to $\leq_C$.
Hence Corollary \ref{coro:nomincere} also provides an elementary difference
between the $C$-degrees and the $K$-degrees of c.e.\ reals and c.e.\ sets.
\begin{coro}
The structures of the $C$-degrees and the $K$-degrees 
of c.e.\ reals are not elementarily equivalent.
Moreover the same holds for the corresponding structures
of c.e.\ sets.
\end{coro}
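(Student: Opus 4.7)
The plan is to argue both assertions by using the first-order sentence ``there exists a minimal pair'' as an elementary distinguishing feature between $\leq_C$ and $\leq_K$ in each of the two substructures.

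For the c.e.\ sets, the $C$-degree side is supplied directly by the Merkle-Stephan minimal pair cited in the preceding paragraph. For the $K$-degree side I would apply Theorem \ref{th:lowismulticom}: given any two c.e.\ sets $A, D$ that are not $K$-trivial (i.e.\ $A, D >_K \emptyset$), the theorem produces a c.e.\ set $B$ with $B <_K A$, $B <_K D$ and $B \eqT \emptyset'$. Because Turing complete c.e.\ sets are not $K$-trivial (the main result of \cite{MRtrivrealsH}), $B$ has strictly positive $K$-degree, so $A, D$ cannot form a minimal pair. Thus the $K$-degrees of c.e.\ sets contain no minimal pair at all, which is the desired elementary difference.

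For the c.e.\ reals, the absence of minimal pairs in the $K$-degrees is precisely Corollary \ref{coro:nomincere}. To obtain a minimal pair in the $C$-degrees of c.e.\ reals, I would lift the Merkle-Stephan c.e.-set pair $A, B$ as follows. Suppose $R$ is a c.e.\ real with $R \leq_C A$ and $R \leq_C B$. Form the c.e.\ set $R^{\ast}$ canonically associated with the c.e.\ real approximation of $R$, exactly as in the proof of Lemma \ref{le:c.erealase}. That construction yields $R^{\ast} \eqT R$ via a Turing reduction whose use on input $n$ is at most $n$, which is enough to conclude $C(R^{\ast}\restr_n) \leq C(R\restr_n)+O(1)$, hence $R^{\ast}\leq_C R$. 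Then $R^{\ast}\leq_C A$ and $R^{\ast}\leq_C B$, so by the Merkle-Stephan minimal pair property $R^{\ast}$ is $C$-trivial, therefore computable; consequently $R \eqT R^{\ast}$ is computable and in particular $C$-trivial.

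The only subtle point, and the step I regard as the main obstacle, is the transfer of Lemma \ref{le:c.erealase} from $\leq_K$ to $\leq_C$. This needs the straightforward observation that a Turing reduction with linearly bounded use gives a plain-complexity bound of the same kind (by hard-coding the index of the reduction into a short prefix of a description of $R^{\ast}\restr_n$), together with the fact that $C$-triviality coincides with computability and is therefore invariant under Turing equivalence, paralleling the way $K$-triviality is treated in the original lemma.
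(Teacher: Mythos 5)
Your proof is correct, and at a high level it follows the same strategy the paper uses: take the existence of a minimal pair as the first-order sentence that distinguishes the $C$-degrees from the $K$-degrees, with the $K$-degree side handled by Theorem \ref{th:lowismulticom} (for c.e.\ sets) and Corollary \ref{coro:nomincere} (for c.e.\ reals), and the $C$-degree side handled by the Merkle--Stephan minimal pair of c.e.\ sets under $\leq_C$.

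Where you go beyond the paper is in the c.e.-reals half of the $C$-degree side. The paper simply states the corollary as an immediate consequence of the Merkle--Stephan result and Corollary \ref{coro:nomincere}, leaving implicit either that the Merkle--Stephan pair is minimal in the full $C$-degrees (so that restricting to c.e.\ reals is automatic) or the lifting argument you supply. You make that lifting explicit: given a c.e.\ real $R$ with $R \leq_C A$ and $R\leq_C B$, you pass to the canonically encoded c.e.\ set $R^\ast$ from the proof of Lemma \ref{le:c.erealase}, observe that the Turing reduction from $R$ to $R^\ast$ with linearly bounded use yields $R^\ast \leq_C R$ (for plain complexity this is cleaner than the $\leq_K$ case, since there is no prefix-free constraint to respect), and then invoke the Merkle--Stephan minimal-pair property for c.e.\ sets together with the fact that $C$-triviality coincides with computability (a classical theorem) to conclude $R$ is computable, hence $C$-trivial. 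This is a careful and self-contained way to get the c.e.-reals version of the $C$-degree minimal pair, and it makes the corollary depend only on the c.e.-set form of the Merkle--Stephan result. The paper's version is shorter but relies on the reader knowing the exact scope of the cited minimal pair; your version is longer but completely explicit. Both are correct.
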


A final application of Theorem \ref{th:lowismulticom}
concerns the following question.
\begin{equation}\label{eq:comlohcom}
\parbox{10cm}{Is there a pair of sequences $X,Y$ which are not $K$-trivial
but $\min\{K(X\restr_n), K(Y\restr_n)\}-K(n)$ has a constant upper bound?}
\end{equation}
Theorem \ref{th:lowismulticom} in combination
with Lemma \ref{le:c.erealase} answers (\ref{eq:comlohcom}) in the negative
in the case where $X,Y$ are required to be computably enumerable reals.
\begin{coro}\label{coro:exactpair}
Suppose that $A_i$, $i<k$ is a finite collection of 
computably enumerable reals and
none of them is 
$K$-trivial. Then for all $c$ there exist $n$ such that 
$K(A_i\restr_n)>K(n)+c$ for all $i<k$.
\end{coro}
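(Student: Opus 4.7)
The plan is to argue by contradiction, using Theorem \ref{th:lowismulticom} to manufacture a c.e.\ set whose initial segments are bounded by $K(n)$ up to an additive constant, which is forbidden by the theorem of \cite{MRtrivrealsH} once such a set is Turing complete. So I would assume that some constant $c$ witnesses failure of the conclusion: for every $n$ there is an index $i_n<k$ with $K(A_{i_n}\restr_n)\leq K(n)+c$.

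The first step is to replace the c.e.\ reals by c.e.\ sets of no larger $K$-complexity. Applying Lemma \ref{le:c.erealase} to each $A_i$ yields a c.e.\ set $B_i$ with $\emptyset<_K B_i\leq_K A_i$. None of the $B_i$ is $K$-trivial, so the iterated form of Theorem \ref{th:lowismulticom}, asserted in the excerpt to follow trivially from the binary case, furnishes a single c.e.\ set $B$ with $B\equiv_T\emptyset'$ and $B<_K B_i$ for every $i<k$.

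The second step is a routine constant-chase. Fix constants $d_i,e_i$ with $K(B_i\restr_n)\leq K(A_i\restr_n)+d_i$ and $K(B\restr_n)\leq K(B_i\restr_n)+e_i$ for all $n$, and set $c'=c+\max_{i<k}(d_i+e_i)$. For every $n$, choosing the index $i_n$ supplied by the hypothesis gives
\[
K(B\restr_n)\leq K(B_{i_n}\restr_n)+e_{i_n}\leq K(A_{i_n}\restr_n)+d_{i_n}+e_{i_n}\leq K(n)+c'.
\]
Hence $B$ is $K$-trivial, contradicting $B\equiv_T\emptyset'$ together with the main theorem of \cite{MRtrivrealsH}.

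I do not expect any genuine obstacle, since the heavy lifting has been done by Theorem \ref{th:lowismulticom} and Lemma \ref{le:c.erealase}. The one subtle point to flag is that the witnessing index $i_n$ varies with $n$; what makes the argument go through is precisely that Theorem \ref{th:lowismulticom} provides a single $B$ simultaneously $K$-below \emph{all} of the $B_i$, so the varying choice of witness is absorbed into a uniform constant $c'$.
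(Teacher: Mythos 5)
Your argument is correct and it is exactly the intended route: combine Lemma \ref{le:c.erealase} (to pass from c.e.\ reals to c.e.\ sets) with the iterated form of Theorem \ref{th:lowismulticom} (to get a single Turing-complete $B$ with $B\leq_K B_i$ for all $i$), and then a standard constant-chase together with the incompleteness of $K$-trivials from \cite{MRtrivrealsH} yields the conclusion. The paper leaves these details implicit, so your write-up fills in precisely the expected bookkeeping; the one cosmetic difference is that you phrase it as a contradiction, but the same chain of inequalities gives a direct proof by starting from the fact that the Turing-complete $B$ is not $K$-trivial.
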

\noindent
We do not know the answer of (\ref{eq:comlohcom}) in general.

We conclude this section with a brief discussion on the topic of the
initial segment complexity of c.e.\ sets.
It would be interesting to locate elementary differences between the
$K$-degrees of c.e.\ reals and the $K$-degrees of c.e.\ sets.
This was done in \cite{DBLP:conf/cie/Barmpalias05} for the Solovay degrees
by showing that there are no maximal elements
in the Solovay degrees of c.e.\ sets. This line of research on the c.e.\ sets with respect
to reducibilities that are sensitive to initial segment complexity measures
was extended in \cite{Merklebarmp}. The quest
for elementary differences between
$K$-degrees of c.e.\ reals and the $K$-degrees of c.e.\ sets
lead to more general questions
regarding the c.e.\ sets in the
$K$-degrees and the $C$-degrees which were articulated in
a research proposal that was presented (along with several related results) 
in \cite{barak}.
An interesting product of this project is the following result from \cite{ChaitAnalogue}.
\begin{equation}\label{eq:maxcikc}
\parbox{10.5cm}{{\em There is a maximum in the $K$-degrees and the $C$-degrees of c.e.\ sets.}}
\end{equation}
In other words, there are c.e.\ sets with maximum initial segment complexity.
For the case of the plain complexity, a c.e.\ set $A$ has maximum
initial segment complexity if and only if the halting problem is reducible to it via
a Turing oracle computation where the oracle use is bounded by a linear function.
Moreover, it turned out that the above condition is equivalent to
$\forall n,\ C(A\restr_n)\geq \log n -c$ which is a well known property that was studied in
\cite{Barzlemma}. It follows from \eqref{eq:maxcikc} and 
\cite{DBLP:conf/cie/Barmpalias05} (see \cite{barak}  for more details) that the existence of a maximum
degree is an elementary difference between the $K$-degrees of c.e.\ sets and the $K$-degrees of c.e.\ reals.

\subsection{Related work on weak reducibilities}\label{subse:otherwred}
A method for exploiting the power of an oracle to achieve better compression of programs
(along with a computable approximation to it) has been used in the study of another
reducibility that is related to randomness and is called $\leq_{LK}$.
We say that $X\leq_{LK} Y$ if $\exists c\forall \sigma\ (K^Y(\sigma)\leq K^X(\sigma)+c)$.
In other words $X\leq_{LK} Y$ formalizes the notion that $Y$ can achieve an overall
compression of the strings that is at least as good as the compression achieved by $X$.
Moreover by \cite{millerdomi} it coincides with $X \leq_{LR} Y$ which denotes the relation
that every random sequence relative to $Y$ is also random relative to $X$.
The degree structure that is induced by $X\leq_{LK} Y$  has a least element that turns out to contain
exactly the $K$-trivial sequences.
In \cite{BarmpaliasM09} 
an argument was used that exploits  the compression power of nontrivial 
c.e.\ sets in the study of the structure of 
c.e.\ sets under $\leq_{LK}$.
A similar argument was used in \cite{Barmpalias:08}
in order to show that every $\Delta^0_2$ set 
with nontrivial compression power has uncountably many
predecessors with respect to $\leq_{LK}$.
In \cite{BarmpaliasCompress} this approach was further developed
in order to exhibit elementary differences between various local
structures of the $LK$ degrees and the Turing degrees.
We note that the arguments in these references work explicitly with 
$\leq_{LR}$ but can alternatively be implemented with
the equivalent $\leq_{LK}$.

However there are some differences between $\leq_K$ and $\leq_{LK}$,
the most important being that in $\leq_{LK}$ we usually work with oracle computations
while in $\leq_K$ we only work with descriptions.
It is quite remarkable that the triviality notion with respect to 
$\leq_K$ coincides with the triviality notion with respect to $\leq_{LK}$.
As soon as we consider sequences of non-zero $K$-degrees or $LK$-degrees,
the study of the two structures becomes less uniform.
A comparison of the arguments about the non-existence of
minimal pairs of $K$-degrees in this paper with
the corresponding arguments in \cite{BarmpaliasCompress} that refer to the $LK$ 
degrees shows that they follow a similar structure, yet various aspects need to be
addressed individually. We discuss the high level view of these arguments in Section \ref{se:concrem}.
 
\section{Preliminaries}\label{se:prothlowisa}
The main tool in the proof of these theorems is a method of coding information
into a set $B$ that is constructed, while keeping its initial segment 
complexity below
the complexity of a given c.e.\ set $A$ that is not $K$-trivial.
It is a method for exploiting the fact that a given set has a computable enumeration
and non-trivial initial segment complexity, for the purpose of coding. 
In particular, it allows to meet the conflicting requirements
$\emptyset'\leq_T B$ and $B\leq_K A$.

\subsection{Prefix-free machines}
For $B\leq_K A$ we need to build a prefix-free machine
that witnesses the relation of the two complexities.
Let $U$ be the optimal
prefix-free machine which underlies the 
prefix-free complexity $K$.
Hence $K=K_{U}$. This machine is optimal in the sense that given any other
prefix-free oracle machine $M$ there is a constant $c$ such that
$K(\sigma)\leq K_M(\sigma)+c$ for all strings $\sigma$.
The {\em weight} of a prefix-free set $S$ of strings, denoted $\mathtt{wgt}(S)$, is defined
to be the sum $\sum_{\sigma\in S}  2^{-|\sigma|}$. The
{\em weight} of a prefix-free machine $M$ 
is defined to be the weight of its domain and is denoted $\mathtt{wgt}(M)$.
Without loss of generality we assume that  $\mathtt{wgt}(U)<2^{-2}$.

Prefix-free machines are most often built in terms of {\em request sets}.
A request set $L$ is a set of tuples $\langle \rho, \ell\rangle$ where $\rho$ is a string
and $\ell$ is a positive integer. A `request' $\langle \rho, \ell\rangle$
represents the intention of describing $\rho$
with a string of length $\ell$. We
define the {\em weight of the request} $\langle \rho, \ell\rangle$ to be $2^{-\ell}$.
We say that $L$ is a {\em bounded request set}
if  the sum of the weights of the requests in $L$ is less than 1.
This sum is the {\em weight of the request set $L$} and is denoted by $\mathtt{wgt}(L)$.
The Kraft-Chaitin theorem (see e.g.\ \cite[Section 2.6]{rodenisbook}) 
says that for every bounded request set $L$
which is c.e., there exists a prefix-free 
machine $M$ such that for each $\langle \rho, \ell\rangle\in L$
there exists a string $\tau$ of length $\ell$ such that $M(\tau)=\rho$.
We freely use this method of construction without explicit reference to the
Kraft-Chaitin theorem.
A real number $0\leq r<1$ is called computably enumerable (c.e.)  if it is the 
limit of a non-decreasing computable sequence of rational
numbers.  The binary strings are ordered first by length and then lexicographically.

\subsection{Constructions in computability theory}
This brief discussion is relevant to the constructions
of Sections \ref{subse:profthA} and \ref{subse:profthB}
and is likely to be handy to a reader who is not expert in 
such arguments.
Constructions in computability theory typically take place in stages and 
involve various parameters.
Given a parameter, we use the suffix `$[s]$' to denote
the value of a parameter at the end of stage $s$. In the particular case of some sets
$A, B, D, \emptyset'$ that are enumerated in the course of a construction, we simplify this notation
by making `$s$' a subscript, thus obtaining $A_s, B_s, D_s, \emptyset'_s$
respectively. 
Parameters may have different values at different stages. 
Some parameters are defined in terms of 
the {\em given objects}, for example a fixed universal Turing machine (which is
not in our control) or a given set that is mentioned in the hypothesis of the
theorem that we want to prove. In the case of the construction
of Section \ref{subse:profthA}, the
set $A$ and the universal machine $U$ (along with the Kolmogorov function
$K=K_U)$ are such parameters. We call these {\em parameters of the first type}.
Some parameters are defined in terms of
the objects that we construct, like a machine or a set. 
In the case of the construction
of Section \ref{subse:profthA},
machines $M, N_i$ and the set $B$ are such parameters.
We call these {\em parameters of the second type}.
Most constructions in computability theory are `recursive', in the sense that 
each stage of the construction is defined in terms of
the values of the parameters at the previous stages. 
Usually, we only need to refer to the values of the parameters at the present stage
or the previous stage.
The general rule is that
at each stage of the construction we refer to the values that the parameters of first type have 
at this very stage, while we refer to the values that the parameters of second type
have at (the end of) the previous stage. We follow this standard convention since the values of the
parameters of second type at stage $s$ are only determined at the end of stage $s$.
This rule of thumb is helpful in understanding the formal description of the constructions of Sections
 \ref{subse:profthA} and \ref{subse:profthB}.

\subsection{Coding}\label{subse:coding}
The coding of $\emptyset'$ into $B$ will be implemented through
a system of movable markers
$m_n, n\in\Nat$, where $m_n$ represents 
position in the characteristic sequence of $B$ in which we code the information
of whether $n\in \emptyset'$. Hence we may call $m_n$ 
the $B$-code of the possible event that consists of the enumeration of 
$n$ into $\emptyset'$. 
The movement of the markers as well as the 
computable enumeration of $B$ will take place in the stages of the enumeration of $\emptyset'$.
In particular the value of $m_n$ at stage $s$ is denoted by $m_n[s]$. It is possible that
$m_n[s]$ is undefined (in symbols, $m_n[s]\un$) 
for some $n,s\in\Nat$. The movement of the markers satisfies the 
following standard properties:
\begin{enumerate}
\item[(i)] {\em Monotonicity on stages:} if $m_n[s]\de, m_n[s+1]\de$  then 
$m_n[s]\leq m_n[s+1]$;
\item[(ii)] {\em Monotonicity on indices:}
if $m_n[s]\de, m_{n+1}[s]\de$ then 
$m_n[s]< m_{n+1}[s]$;
\item[(iii)] {\em Consistency:} if $m_n[s]\de, m_n[t]\de$, $m_n[s]\neq m_n[t]$ and $s<t$, 
 then $m_n[s]\in B$; 
\item[(iv)] {\em Convergence:} $\forall n\ \exists t,k\ \forall s\ (s>t\Rightarrow m_n[s]\de=k)$;
\item[(v)] {\em Coding:} $\forall n\ (n\in\emptyset' \iff m_n\in B)$ where $m_n=\lim_s m_n[s]$.
\end{enumerate}
Given a system of markers $(m_n)$ with the above properties,
we can compute $\emptyset'$ given $B$ as follows. In order to
decide if $n\in\emptyset'$, by clause (iii) we may use $B$ in order
to find a stage $s$ such that either $n\in\emptyset'_s$ or
$m_n[s]\not\in B$. In the latter case we know by (v) that
$n\not\in\emptyset'$.

The essence of our method lies on the specific rules that determine the movement
of the markers $m_i$. Intuitively, in order to maintain
$B\leq_K A$ the markers are forced to move many times. Their convergence
is a consequence of the failure to construct a machine
demonstrating that $A$ is $K$-trivial.
Section \ref{subse:profthA} contains the formal argument.

It turns out that
this type of sparse coding may be `permitted' by any finite number of given c.e.\ sets
that are not $K$-trivial. In particular, with some additional effort we can do the same
coding into $B$ while keeping its initial segment  complexity
below any two
given c.e.\ sets $A, C$ that are not $K$-trivial.
Section \ref{subse:profthB} is devoted to the proof
of this generalized result.

\section{Proof of Theorem \ref{th:lowiscom}}\label{subse:profthA}
Let $A$ be a computably enumerable set which is not $K$-trivial.
For the proof of Theorem \ref{th:lowiscom} it suffices to construct 
a computably enumerable set $B$
such that $B\equiv_T\emptyset'$ and $B\leq_K A$.
This follows from the fact that the c.e.\ $K$-degrees are downward dense,
i.e.\ for each c.e.\ set $X$ such that $\emptyset<_K X$ there exists a c.e.\ set $Y$
such that $\emptyset<_K Y<_K X$; see \cite[Section 5]{Barstris}.

\subsection{Parameters and formal requirements of the construction}\label{subse:deparcon}
In order to make $B$ Turing complete we will use 
a system of markers $(m_i)$ as we  discussed in Section
\ref{subse:coding}.
In order to establish $B\leq_K A$ it suffices to construct a prefix-free machine
$M$ such that
\begin{equation}\label{eq:negreqkb}
K_M(B\restr_n)\leq K(A\restr_n)\ \textrm{\ \ \ for all $n$}
\end{equation}
where $K_M$ denotes the prefix-free complexity relative to machine $M$.
Recall that $K$ denotes the prefix-free complexity relative to a fixed universal
prefix-free machine $U$ such that
$\texttt{wgt}(U)<2^{-2}$.

For each marker $m_i$ we enumerate a prefix-free machine
$N_i$ during the construction. The purpose of $N_i$ is
to achieve $\forall n\ (K_{N_i}(A\restr_n)\leq K(n)+c_i)$
for some constant $c_i$.
Since $A$ is not $K$-trivial, this will ultimately fail. However this failure
will help demonstrate that $m_i$
converges: if $m_i$ moves at stage $s+1$ (and all $m_j, j<i$ remain stable), 
the construction refreshes $N_i$ so that $K_{N_i}(A\restr_n)[s+1]\leq K(n)[s+1]+c_i$ holds for the least
$n$ such that $K_{N_i}(A\restr_n)[s]> K(n)[s]+c_i$.
The value of $c_i$ may increase during the construction.
This happens each time some $m_j$, $j<i$ moves. 
Such an event is often described as an `injury' of $m_i$.
In particular, if at some stage $s$
marker $m_k$ moves while $m_j$, $j<k$ remain constant
this causes $m_i$, $i>k$ to be injured,
which has the following consequences:
\begin{itemize}
\item for each $i>k$, markers $m_i$  become undefined and $N_i$ is reset;
\item the values $c_i, i>k$ increase by 1.
\end{itemize}
To `reset' machine $N_i$ means 
to discard all of its computations thus starting to build a new
machine.
Each marker will only be injured finitely many times.
We let $c_i[s]$ denote the 
value of $c_i$ at stage $s$.
At each stage $s$ let $t_i[s]$ be defined as follows:
\[
\parbox{11cm}{$t_i[s]$ is the least number $t$ such that $K_{N_i}(A_{s+1}\restr_t)[s]> K(t)[s+1]+c_i[s]$.}
\]
Each marker $m_i$ has the incentive to move at some stage $s+1$ if it 
observes a set of descriptions of sufficient weight of segments of $A_{s+1}$ 
that are longer than its current position.
This weight is determined by the number (a sort of a `threshold') 
\begin{equation}\label{eq:thresqis}
q_i[s] =2^{-K(t_i[s])[s+1]-c_i[s]}.\end{equation}
The marker $m_i$ requires attention at stage $s+1$ if
$m_i[s]$ is defined, 
$m_i[s]\not\in B_{s}$ and one of the following occurs:
\begin{itemize}
\item[(a)] $i\in \emptyset'[s+1]$;
\item[(b)] $\sum_{m_i[s]<j\leq s} 2^{-K(A\restr_j)[s+1]} \geq q_i[s]$;
\end{itemize}
Note that if $m_i[s]\in B_{s}$ then we must have $i\in \emptyset'_s$.
Hence in this case we do not have any direct reason to move $m_i$ even if (b) holds,
because there will not be any latter stage where we need to enumerate
$m_i[s]$ into $B$ (it is already in it). Of course some $m_j$ with $j<i$
may move at a latter stage, in which case we will need to move $m_i$ too,
but this amounts to a typical finite injury aspect of the construction. 
Alternatively it is clear that we could have set up the
construction so that the condition $m_i[s]\not\in B_{s}$ is not present in the above definition
of $m_i$ `requiring attention'.

For each $i\in\Nat$ we set $c_i[0]=i+3$.
At each stage $s+1$ the machines $N_i$ will be adjusted according to
changes of $K(n)$ for $n<t_i[s]$. This is done by running the 
subroutine (\ref{eq:tsubroureprele}) of the construction in Section
\ref{subse:constr1s}.
A {\em large} number at stage $s+1$ is one that is larger than any number that has been
the value of any parameter in the construction up to stage $s$. Note that
an enumeration of a number $n$ into $B$ only changes the segments
$B\restr_i$ for $i>n$, since $B\restr_i$ consists of the first $i$ bits of $B$, and the
last of these is $B(i-1)$. This is why in the construction below, if we enumerate 
$m_n[s]$ into $B$, we only need to `refresh' the descriptions of $B\restr_k$ 
for $k>m_n[s]$.

\subsection{Intuitive explanation of the dynamics in the construction}\label{sec:intuition} 
Before we give the formal construction and verification, we present some
intuitive and informal comments that illustrate the ideas behind the argument. 
The discussion consists of thee parts: the description of the main conflict
(the coding increases the size of $M$), the simplistic solution to the conflict
(which unfortunately causes the coding procedure to diverge) 
and the final solution that makes all requirements
satisfied. The arguments that we present informally here (especially the third part
of the discussion) correspond to the formal part of the proof in Section \ref{subse:versingt}.

\subsubsection{The main conflict: bounding \texorpdfstring{$M$}{M}
versus coding}\label{sssec:mainconfl}
We use the family of (movable) markers $(m_i)$ in order to ensure that 
$\emptyset'\leq_T B$ as we elaborated in Section \ref{subse:coding}.
On the other hand, we continuously enumerate computations in the machine $M$
according to \eqref{eq:negreqkb}. At each stage these computations ensure
that the initial segment complexity of $B$ (up to a certain length) with respect to $M$
is not greater than the initial segment complexity of $A$. In this way, certain descriptions in
the domain of $U$ (which defines the Kolmogorov function $K=K_U$) induce 
the enumeration of $M$-descriptions of the same length. 

The primary conflict in this argument
is that the coding will cause certain numbers to be enumerated into $B$, and these changes
of the approximation to $B$ will increase the weight of the domain of $M$. This happens because
for every change of the approximation to $B\restr_n$ we need to enumerate an additional description
(corresponding to the new value of $B\restr_n$), possibly of the same length $K(A\restr_n)$
(if the approximation to $A$ has remained the same). This standard conflict is depicted in
Figure \ref{fig:dynam0} and will be present throughout the argument.
Here the solid arrows indicate that enumeration of the codes into $B$ cause the enumeration
of additional weight in the domain of machine $M$.
The dashed arrows between the markers indicate the finite injury effect that occurs amongst them,
which was already indicated in Section \ref{subse:coding}.

A typical situation which illustrates this conflict is the following. 
At some stage $s_0$ we enumerate
an $M$-description of $B\restr_{m_i}[s_0]$ of length $K(A\restr_{m_i})[s_0]$ 
according to \eqref{eq:negreqkb}.
Let $k=m_i[s_0]$ and assume that $K(A\restr_{k})[s_0]=K(A\restr_{k})$.
At some latter stage $s_1$, the number $i-1$ enters $\emptyset'$ 
and we are forced to enumerate $m_{i-1}[s_1]<k$ into $B$. Subsequently,
$i-2$ enters $\emptyset'$ at some stage $s_2$ 
and provokes the enumeration of $m_{i-2}[s_2]<k$ into $B$. And so on, until
$m_0[s_i]<k$ is enumerated into $B$ at some stage $s_i$. During this `cascade' of enumerations, 
the construction will be enumerating
descriptions of the current approximation to $B\restr_{k}$. Hence the construction
will enumerate at least $k$ descriptions of the same length $K(A\restr_{k})$.
If $K(A\restr_{k})=3$ and $k=2^{4}$ then clearly it is not possible to ensure that the weight of 
$M$ is bounded.

\subsubsection{A step to the solution: additional movement of the markers}\label{subs:steptosol}
We deal with the problem of bounding the weight of $M$ 
by moving the markers $(m_i)$, even before
their index (or a smaller index) is enumerated in $\emptyset'$. Of course, this movement
will obey the rules that we set out in Section \ref{subse:coding}.
We will show that by setting appropriate movement rules for the markers,
we can argue that the weight of $M$ is bounded. Figure \ref{fig:dynam1} illustrates
the dynamics of this construction (which is determined below). The features of the crude construction of
Section \ref{sssec:mainconfl} continue to apply here: 
computations in $U$ provoke the enumeration of computations
in $M$ and the activity of the markers trigger the enumeration
of additional $M$-descriptions (while `injuring'  the markers with larger index). Note that
the additional movement of the markers that we enforce in the current form of the construction (see below) induce
additional enumerations of computations in $M$. 
 Figure \ref{fig:dynam1} also features arrows from $M$ to the markers: these illustrate that the enumeration of
 $M$-computations sometimes triggers the movement of the markers. In the following we explain exactly how this
 construction works and why it ensures that the weight of $M$ is bounded. 
 
\begin{figure}
 \scalebox{0.8}{
\begin{tikzpicture}[
main/.style={circle, minimum size=6mm, very thick,
draw=red!50!black!50, top color=white, bottom color=red!50!black!20, font=\small},
U/.style={circle, very thick,
draw=white!50!black!50, top color=white, bottom color=white!50!black!20, font=\small},
Nnode/.style={rectangle,  minimum size=6mm, very thick,
draw=green!50!black!50, top color=white, bottom color=green!50!black!20, font=\small },
tnode/.style={rectangle, rounded corners, very thick,
draw=yellow!50!black!50, top color=white, bottom color=yellow!50!black!20, 
minimum width =10mm, font=\small},
skip loop/.style={to path={-- ++(0,#1) -| (\tikztotarget)}}]
\node (M) [main] at (0,0) {\parbox{1.7cm}{Machine $M$}};
\node(U)[U] at (-2,0) {$U$};
\draw [->] (U) -- (M);
\node (m0) [tnode] at (5,2) {$m_0$};
\node (m1) [tnode] at (5,1) {$m_1$};
\node (m2) [tnode] at (5,0) {$m_2$};
\node (dots) at (5,-1) {$\cdots$};
\node (dots2) at (3,-1) {$\cdots$};
\draw [<-] (M) -- (m0.west);
\draw [<-] (M) -- (m1.west);
\draw [<-] (M) -- (m2.west);
\draw [->, dotted] (m0) -- (m1);
\draw [->, dotted] (m1) -- (m2);
\end{tikzpicture}
}
\caption{{\textrm Machine $U$ adds computations in $M$, while the activity of the markers 
causes additional (`copies' of the previous) computations to be added in $M$.}}
\label{fig:dynam0}
\end{figure}

We describe a rule for moving the markers $m_i$ which guarantees that
the weight of $M$ is bounded. 
Let $w_i[s]=\sum_{n>i} 2^{-K(n)[s]}$.
The rule is that marker $m_i$ will move at stage $s$ if $w_i[s]>2^{-r_i[s]-i}$,
where $r_i[s]$ is the number of times that it has moved prior to stage $s$. Of course
we also obey the movement rules that were set out in Section \ref{subse:coding}
(i.e.\ it also moves if $i$ is newly enumerated in $\emptyset'$ or if some $m_j$ with
$j<i$ moves at stage $s$). 
In a standard fashion, we will only enumerate an $M$-description for some $B\restr_n$ if all
markers that occupy positions $< n$ `appear to be stable', namely they have not moved since 
the last stage.

We can argue that in this case the weight of $M$ is
bounded is as follows. Every $M$-description 
(of an approximation to a segment of $B$) corresponds to a $U$-description
(of an approximation to a segment of $A$), where $U$ is the universal
prefix-free machine. Indeed, every $M$-description $\tau$ (describing an initial segment of the current
approximation to $B$) 
is issued according to a certain
$U$-description $\sigma$ (describing an initial segment 
of the same length of the current approximation to $A$).
In this case we say that $\sigma$ is {\em used} by $\tau$.
If $\sigma$ has already been used by $\tau$ and it is later used by a different string
$\tau'$, then we say that $\sigma$ has been {\em reused}.
As illustrated in the above `cascade' example, a $U$-description may be {\em used} by
several $M$-descriptions. In other words, the
correspondence between the domains of $U$ and $M$ 
is not necessarily one-to-one. 
However every $M$-description always corresponds to a $U$-description of equal length.
We will use the weight of $U$ in order to bound the weight of $M$
as follows. Let $S_0$ denote the 
strings in $U$ that are {\em used} by at least one description in $M$ during the construction. 
Clearly $S_0$ is a subset of the domain of $U$, so
$\texttt{wgt}(S_0)< 2^{-k}$.
Also let $S_1$ be the
set of $U$-descriptions that are used by at least two $M$-descriptions. More generally, let $S_k$ be
the set of $U$-descriptions that are used by at least $k+1$ descriptions in $M$.
Then $S_{k+1}\subseteq S_k$, so this family of sets can be illustrated as in Figure \ref{fig:treq}.
Note that if a string $\sigma$ in $S_k$ enters $S_{k+1}$ then there is a unique marker $m_i$ that `causes' this
change. Indeed, $\sigma$ is {\em used} a one more time, which means that the approximation
to the segment $B\restr_n$
(where $n$ is the length of the segment of the current approximation to $A$ that $\sigma$ describes)
changes, due to the enumeration of (the current value of) a marker into $B$. Let $m_i$ be this marker
(if there are more than one markers with this property, we choose the one with the least index). We say that
the entry of $\sigma$ into $S_{k+1}$ is due to the movement of $m_i$.

According to the correspondence between the domains of $U$ and $M$ that we discussed
above,  we can use 
\begin{equation*}
\mathtt{wgt}(M)\leq \sum_k \mathtt{wgt}(S_k).
\end{equation*}
 to bound the weight of $M$. Note that each description in $S_k$ is
counted $k+1$ times in this sum as it belongs to all $S_i, i\leq k$. So it suffices to show that
$\texttt{wgt}(S_k)< 2^{-k}$ for each $k$. Since 
$\texttt{wgt}(S_0)< 2^{-2}$ we also have $\texttt{wgt}(S_1)< 2^{-2}$. Let $k>1$.
Every entry of a string into $S_k$ must have occurred due to the movement of a marker $m_x$.
Moreover, it must have followed the entry of the string into $S_{k-1}$, which in turn must have occurred due to
the movement of a marker $m_y$ with $y>x$. Inductively, every string that enters $S_k$ must be one of the strings
that was  previously enumerated in $S_1$ due to the movement of a marker $m_z$ with $z\geq k-1$.
Let $S_k^z$ be the set of $U$-descriptions in $S_k$ that enter $S_1$ due to the movement
of marker $m_z$. 
Then $\texttt{wgt}(S_k)\leq \sum_{z\geq k-1} \texttt{wgt}(S^z_k)$ for each $k>1$.
Hence it remains to show that $\texttt{wgt}(S^z_1)< 2^{-z-2}$ for each $z$. This follows by the
way we defined the movement of each $m_z$. The $i$th time it moves it is responsible for
new $M$-descriptions of weight at most $2^{-z-i-3}$. So overall
$\texttt{wgt}(S^z_k)$ is bounded by $\sum_i 2^{-z-i-3}= 2^{-z-i-2}$.\footnote{There is a 
more direct way to argue that the weight of $M$ is bounded, by assigning
the additional $M$-descriptions that are issued to the individual markers that caused the relevant changes to the approximation
to $B$. However this argument does not apply to the full construction. The argument we presented here
will be used largely intact in the proof of Theorems \ref{th:lowiscom} and \ref{th:lowismulticom}.}

\begin{figure}
\scalebox{0.8}{
\begin{tikzpicture}[
main/.style={circle, minimum size=6mm, very thick,
draw=red!50!black!50, top color=white, bottom color=red!50!black!20, font=\small},
U/.style={circle, very thick,
draw=white!50!black!50, top color=white, bottom color=white!50!black!20, font=\small},
Nnode/.style={rectangle,  minimum size=6mm, very thick,
draw=green!50!black!50, top color=white, bottom color=green!50!black!20, font=\small },
tnode/.style={rectangle, rounded corners, very thick,
draw=yellow!50!black!50, top color=white, bottom color=yellow!50!black!20, 
minimum width =10mm, font=\small}]
\node (M) [main] at (0,0) {\parbox{1.7cm}{Machine $M$}};
\node(U)[U] at (-2,0) {$U$};
\draw [->] (U) -- (M);
\node (m0) [tnode] at (5,2) {$m_0$-movement};
\node (m1) [tnode] at (5,1) {$m_1$-movement};
\node (m2) [tnode] at (5,0) {$m_2$-movement};
\node (dots) at (5,-1) {$\cdots$};
\node (dots2) at (3,-1) {$\cdots$};
\draw [->] (M) to [in=174]  (m0);
\draw [->, dashed] (m0) to [out=190, in=35]  (M);
\draw [->] (M) to [out=20, in=180] (m1);
\draw [->, dashed] (m1) to [out=193, in=10] (M);
\draw [->] (M) -- (m2.west);
\draw [->, dashed] (m2) to [out=190, in=-10] (M);
\draw [->, dotted] (m0) to (m1);
\draw [->, dotted] (m1) -- (m2);
\end{tikzpicture}
}
\caption{{\textrm Bounding the weight of $M$ requires several movements of the markers. 
In this scheme, increase of the
domain of $M$ provokes the movement of the markers, which in turn
induces additional enumerations of computations into $M$.}}
\label{fig:dynam1}
\end{figure}
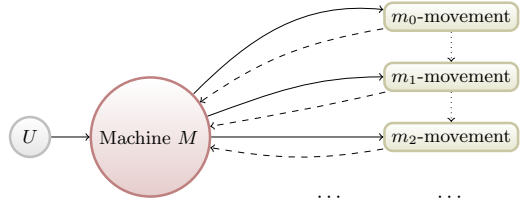

\subsubsection{Ensuring that the markers eventually halt}
The construction of Section \ref{sssec:mainconfl} is based on the rule 
`we move a marker $m_i$ at stage $s$ if the weight of the $M$ descriptions of
$B$ that we will be called to re-describe if $m_i$ is enumerated in $B$, is {\em large}'. In this case
we interpreted `large' as `more than $2^{-r_i[s]-i}$ where $r_i[s]$ is the number of times that $m_i$ has moved
by stage $s$. We refer to $2^{-r_i[s]-i}$ as the {\em threshold for the movement of} $m_i$.
Although this rule allows us to argue that the weight of $M$ is bounded 
(which was the main conflict that was described in Section \ref{sssec:mainconfl}), it is not hard to see that
it causes some markers to move indefinitely.
Clearly this is not in line with the requirements that we set out in
Section \ref{subse:coding} (which are sufficient for 
deducing that $\emptyset'\leq_T B$), so we need to tune
the movement rules for the markers in order to ensure that all requirements are satisfied.
This adjustment will take into account the so-far-unused hypothesis that $A$ 
is not $K$-trivial. 

The idea here is to tie the movement of each marker $m_i$ with the computations enumerated
in an auxiliary machine $N_i$ (constructed by us) which attempts to show that $A$ is $K$-trivial.
The enumerations into $N_i$ will take place at stages where $m_i$ moves and will 
keep the weight of $N_i$ bounded.
We need to define the threshold $q_i$ for the movement of $m_i$ in such a way that
indefinite movement of $m_i$ implies that $N_i$ succeeds its purpose, 
i.e.\ $\forall n, K_{N_i}(A\restr_n)\leq K(n)+c_i$
for some constant $c_i$. 
The threshold $q_i$ is defined in such a way that the enumerations of computations into $M,U, N_i$ are
connected quantitatively. This is essential as the bounds on the weight of $N_i, M$ are eventually reduced to
a bound on the weight of $U$.
The formal definitions of the parameters were given in the beginning of
Section \ref{subse:profthA} and the formal construction is given in Section \ref{subse:constr1s}.

Figure \ref{fig:dynam2} illustrates the dynamics of this refined argument. The features that were discussed in
Sections \ref{sssec:mainconfl} and \ref{subs:steptosol} continue to be present here. 
In addition, the cycle between the growth of $M$ and the movement of $m_i$ fuels the growth of 
an auxiliary machine $N_i$
In particular, the movement of $m_i$ not only adds to the weight of $M$ but also 
triggers the enumeration of additional computations into $N_i$. The growth of $N_i$ threatens to show
that $A$ is $K$-trivial, so it cannot continue indefinitely (and the same holds for the movement of $m_i$). Moreover,
enumeration into $N_i$ causes the `injury' of $N_j$ for all $j>i$. This means that in such cases we initialise $N_j$,
deleting all of its computations and start with a new copy of it. This does not cause any problem to the verification of
the argument, which is done inductively.

Let us conclude this informal discussion with a summary of the mechanics that is illustrated in Figure
\ref{fig:dynam2} and the way it relates to the formal definitions of the parameters $N_i, q_i, m_i, t_i, c_i$.
Every time $m_i$ moves, it enumerates $N_i$ descriptions
(threatening to show that $A$ is $K$-trivial, if these movements happen
indefinitely).
But to keep the weight of $N_i$ bounded, we need to count it against the weight of
the universal machine (via the parameter $c_i$). This is why the condition for movement is the inequality
(b), which is based on the threshold $q_i$ which in turn is defined in terms of $t_i$. 
Actually this is only one of the two reasons, the other being
the use of (b) in bounding the weight of $M$ (see below).
The moment that the sum of descriptions of initial segments of $A$
(for larger lengths than $m_i$) {\em hits} $q_i$, we may move $m_i$ 
and add weight $q_i$
to $N_i$. This is because the opponent (the universal machine $U$) showed
us weight $q_i$
(or even more) in descriptions of certain lengths. The next time that we
enumerate in $N_i$,
we justify the increase in the weight of $N_i$ with {\em different} descriptions of $U$
(indeed, descriptions that describe strings of different lengths, because each
time we move $m_i$ to large values).

This is just one side of the picture. The other side is the dynamics regarding
the enumeration of machine $M$.  Here, intuitively, the more we move the markers,
the more we can save on the weight of $M$ (and the more we add to the
machines $N_i$) as we illustrated in Section \ref{subs:steptosol}.
So it is a rather delicate balance that makes the construction work. This is
crystallised by the inequality (b). Choosing the suitable threshold $q_i$ for triggering
movement of $m_i$ is a crucial part of the argument, as it provides a quantitative connection
between the movement of marker $m_i$ with the enumeration of additional computations in
$M$ and in $N_i$. In the verification of the construction, the fact that machines $N_i$ have bounded
weight (as long as they are not `injured') will be immediate. Then an argument along the lines of the
argument of Section  \ref{subs:steptosol} shows that the weight of $M$ is bounded. Finally, the convergence
of the markers $m_i$ follows inductively, using  $N_i$ and the hypothesis that $A$ is not $K$-trivial.

\subsection{Construction of \texorpdfstring{$B, M, N_i$}{B,M,N}}\label{subse:constr1s}
At stage 0 place $m_0$ on $1$.
At stage $s+1$ run subroutine (\ref{eq:tsubroureprele}).
\begin{equation}\label{eq:tsubroureprele}
\parbox{10.5cm}{For each $i\leq s$ and each $k< t_i[s]$, 
if $K(k)[s+1]<K(k)[s]$ then enumerate an $N_i$-description
of $A_{s+1}\restr_k$ of length $K(k)[s+1]+c_i[s]$.}
\end{equation}
Let $z$ be the least number $<s$
such that $K_M(B_{s}\restr_z)[s]>K(A\restr_z)[s+1]$.
If none of the currently defined markers requires attention,
let $n$ be the least number such that $m_n[s]$ is undefined, and 

\begin{itemize}
\item if $n<z$ place $m_{n}$ on the least {\em large} number;
\item  if $z\leq n$ enumerate an $M$-description of $B_{s}\restr_z$ of length
$K(A\restr_z)[s+1]$;
\item end this stage.
\end{itemize}
Otherwise let $n$ be the least number such that $m_n$ requires attention,
put $m_n[s]$ into $B$,
let $m_n[s+1]$ be a {\em large} number and for each $k<s$ such that $k>m_n[s]$ and
$K_M(B\restr_k)[s]\leq K(A\restr_k)[s+1]$
enumerate an $M$-description of $B_{s+1}\restr_k$ of length $K(A\restr_k)[s+1]$.
Moreover 
for each
$j>n$ declare $m_j[s+1]$ undefined, reset $N_j$ and
set $c_j[s+1]=c_j[s]+1$.
If clause (b) of Section \ref{subse:deparcon} applies,
\begin{equation}\label{eq:ifballpenumNax}
\parbox{11.3cm}{enumerate an $N_n$-description of $A_{s+1}\restr_{t_n[s]}$ of
length $K(t_n[s])[s+1]+c_n[s]$.}
\end{equation}
End this stage.

\subsection{Verification}\label{subse:versingt}
Before we start with the main part of the verification, we make
two preliminary observations that follow directly from the construction. 
The first one concerns the relationship between the values of parameters $t_i$ and $m_i$
during the stages of the construction. When $m_i$ is first defined
at some stage $s$ it takes a {\em large} value so $t_i[s]< m_i[s]$.
Moreover $t_i$ can only increase when $N_i$ computations are enumerated
on strings of length $t_i$,
which happens only when
$m_i$ moves. Also if $A\restr_{t_i}$ changes, by the definition of $t_i$ 
(since $A$ is c.e.\ and $N_i$ is built by us) 
it follows that $t_i$ decreases as soon as $m_i$ moves. Hence by induction we have (\ref{eq:timirele}).
\begin{equation}\label{eq:timirele}
\parbox{8cm}{For all $i,s$, if $m_i[s]$ is defined then $t_i[s]< m_i[s]$.}
\end{equation}
The second observation is a conditional monotonicity on the values of $t_i$ during the stages.
If $K(k)$ decreases at some stage $s+1$
for some $k<t_i[s]$, subroutine
(\ref{eq:tsubroureprele}) will ensure that $K_{N_i}(A\restr_k)[s+1]\leq K(k)[s+1]+c_i[s]$.
Hence $t_i$ may only decrease at
$s+1$ if $A_{s+1}\restr_{t_i[s]}\neq A_s\restr_{t_i[s]}$. 
\begin{equation}\label{eq:tmovmonle}
\parbox{10cm}{If  $A_s\restr_{t_i[s]}=A_{s+1}\restr_{t_i[s]}$ then $t_i[s]\leq t_i[s+1]$.
}
\end{equation}
We are now ready to proceed with the first step of the verification, which is to show
that for each $i$ there is a machine $N_i$ as
prescribed in the construction. Recall that the construction may reset $N_i$. This means
that for each $i$ we have many versions of $N_i$. A new version of $N_i$ is placed when the latest one
is reset. In that case all the previous versions of $N_i$ are no more relevant in the rest of the construction
(in particular, they do not change anymore). When we refer to $N_i$ we refer to an arbitrary version of it
and the interval of stages from its introduction until (if ever) it is reset (before its introduction it is empty and after
it is reset it remains constant).
\begin{lem}\label{le:nibound}
For each $i$ the weight of the requests in $N_i$ is bounded.
\end{lem}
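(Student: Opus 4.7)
The plan is to fix $i$ and bound $\mathtt{wgt}(N_i)$ for a single version of $N_i$ (between consecutive resets), throughout which $c_i$ is constant with some value $c\geq i+3$. Requests are enumerated into $N_i$ in exactly two places in the construction, which I would analyse separately: \emph{Type~I} requests from subroutine (\ref{eq:tsubroureprele}), triggered by decreases of $K(k)$, and \emph{Type~II} requests from (\ref{eq:ifballpenumNax}), triggered when clause (b) applies and $m_i$ receives attention.

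The Type~I bound is routine. For a fixed $k$, a request of weight $2^{-K(k)[s+1]-c}$ is enumerated only at stages when $K(k)$ strictly decreases (and $k<t_i[s]$); the successive values $K(k)[s+1]$ over these stages form a strictly decreasing sequence of positive integers with infimum the true $K(k)$, so their inverse powers of two sum to at most $2\cdot 2^{-K(k)}$. Using $\sum_k 2^{-K(k)}\leq\mathtt{wgt}(U)$ (pick for each $k$ a shortest $U$-description and exploit prefix-freeness of $\mathrm{dom}(U)$), the total Type~I weight is at most $2^{-c+1}\mathtt{wgt}(U)<2^{-c-1}$.

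The Type~II bound is the heart of the argument. At each (b)-trigger at some stage $s+1$, the weight added to $N_i$ is $q_i[s]$, and the very defining inequality of clause (b) gives $q_i[s]\leq E_s:=\sum_{m_i[s]<j\leq s}2^{-K(A\restr_j)[s+1]}$. Summed over all Type~II triggers within this version of $N_i$, I would show $\sum E_s\leq \mathtt{wgt}(U)<2^{-2}$. The key point is that immediately after a trigger at stage $s+1$, the new value $m_i[s+1]$ is declared \emph{large}, hence strictly greater than $s$, and within the current version $m_i$ does not move again until the next trigger; so the summation ranges $(m_i[s],s]$ at distinct triggers are pairwise disjoint. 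Each term $2^{-K(A\restr_j)[s+1]}$ corresponds to a shortest $U$-description of $A_{s+1}\restr_j$; since the outputs $A_{s+1}\restr_j$ have pairwise distinct lengths $j$ both within a single range and (by disjointness) across distinct triggers, these $U$-descriptions are pairwise distinct elements of $\mathrm{dom}(U)$, and their total weight is therefore at most $\mathtt{wgt}(U)$.

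Combining the two estimates, $\mathtt{wgt}(N_i)\leq 2^{-c-1}+2^{-2}$, which is a fixed constant strictly below $1$. The main obstacle is securing the disjointness of the $j$-ranges across Type~II triggers; the \emph{large}-value convention for $m_i$ after every move is precisely what prevents any overlap and collapses the accumulated Type~II weight to at most a single copy of $\mathtt{wgt}(U)$, after which the hypothesis $\mathtt{wgt}(U)<2^{-2}$ does the rest.
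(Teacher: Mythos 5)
Your proof is correct and follows essentially the same route as the paper's: decompose the $N_i$-requests into those issued by subroutine (\ref{eq:tsubroureprele}) and those issued by (\ref{eq:ifballpenumNax}) at clause-(b) moves, bound the first by a fraction of $2^{-c_i}\mathtt{wgt}(U)$, and bound the second by $\mathtt{wgt}(U)$ via the defining inequality of clause~(b) together with the disjointness of the intervals $(m_i[s],s]$ that the large-value convention forces. The paper's accounting for the first part charges each pair $(k,s)$ with $K(k)[s+1]<K(k)[s]$ directly to a fresh $U$-description rather than summing a geometric series in $s$ for each fixed $k$, but this is a cosmetic difference that only affects the resulting constant.
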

\begin{proof}
We consider an arbitrary version of $N_i$ and it suffices to prove the lemma
for the interval of stages $[b_0, b_1)$ where $b_0$ is the stage where it was introduced
and $b_1$ is the stage when it was reset (so $b_1$ may be $\infty$).
By the construction, all $m_j[s], j<i$ and $c_i[s]$ remain stable during
the stages $s$ in $[b_0, b_1)$. So in the following we may refer to 
$c_i[s]$ by $c_i$.

\begin{figure}
\scalebox{0.8}{
\begin{tikzpicture}[
main/.style={circle, minimum size=6mm, very thick,
draw=red!50!black!50, top color=white, bottom color=red!50!black!20, font=\small},
U/.style={circle, very thick,
draw=white!50!black!50, top color=white, bottom color=white!50!black!20, font=\small},
Nnode/.style={rectangle,  minimum size=6mm, very thick,
draw=green!50!black!50, top color=white, bottom color=green!50!black!20, font=\small },
tnode/.style={rectangle, rounded corners, very thick,
draw=yellow!50!black!50, top color=white, bottom color=yellow!50!black!20, 
 font=\small}]
\node (M) [main] at (0,0) {\parbox{1.7cm}{Machine $M$}};
\node(U)[U] at (-2,0) {$U$};
\draw [->] (U) -- (M);
\node (m0) [tnode] at (5,2) {$m_0$-movement};
\node (N0) [Nnode] at (8,2) {$N_0$};
\node (m1) [tnode] at (5,1) {$m_1$-movement};
\node (N1) [Nnode] at (8,1) {$N_1$};
\node (m2) [tnode] at (5,0) {$m_2$-movement};
\node (N2) [Nnode] at (8,0) {$N_2$};
\node (dots) at (5,-1) {$\cdots$};
\node (dots2) at (3,-1) {$\cdots$};
\draw [->] (M) to [in=174]  (m0);
\draw [->, dashed] (m0) to [out=190, in=35]  (M);
\draw [->] (M) to [out=20, in=180] (m1);
\draw [->, dashed] (m1) to [out=193, in=10] (M);
\draw [->] (M) -- (m2.west);
\draw [->, dashed] (m2) to [out=190, in=-10] (M);
\draw [->, densely dotted] (m2) to  (N2);
\draw [->, densely dotted] (m1) to  (N1);
\draw [->, densely dotted] (m0) to  (N0);
\draw [->, dotted] (m0) to (m1);
\draw [->, dotted] (N0) to (N1);
\draw [->, dotted] (N1) to (N2);
\draw [->, dotted] (m1) -- (m2);
\end{tikzpicture}}
\caption{{\textrm Auxiliary machines $N_i$ are fuelled by the cycles between
the enumeration of $M$-computations and movement of the markers. They ensure that if $A$ is not $K$-trivial
these cycles have to stop, which implies that the markers eventually halt.}}
\label{fig:dynam2}
\end{figure}
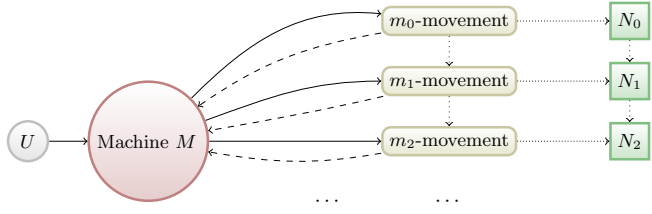

A request is enumerated into $N_i$ either by
by subroutine (\ref{eq:tsubroureprele}) or due to the movement of a marker $m_i$.
We will bound each part of the $N_i$ requests separately and then add the bounds.
First, we consider the 
requests that are enumerated
by subroutine (\ref{eq:tsubroureprele}).
Each such request is associated with a unique pair $(k,s)$ such that
$K(k)[s+1]<K(k)[s]$. Moreover such a request has weight 
$K(k)[s+1]+c_i$. It follows that
the total weight of these requests is bounded by $2^{-c_i}\cdot \texttt{wgt}(U)$, 
which is at most $2^{-2}$.

The only other way that an enumeration into $N_i$ may be requested is when
a marker $m_i$ requires attention at some stage $s+1$. 
Recall from Section \ref{subse:deparcon} the conditions that need to be met in order for $m_i$ 
to require attention at stage $s+1$, and in particular clause (b).
It follows that in this case  
the marker moves to a {\em large} value  and
the weight of the request is
$q_i[s]\leq \sum_{m_i[s]<j\leq s} 2^{-K(A\restr_j)[s+1]}$.  Let $(s_j)$ be the sequence
of stages in $[b_0, b_1)$ where $m_i$ moves.
Then the weight of the requests that are enumerated in $N_i$ in this way (via the movement of $m_i$)
is bounded by
\[
\sum_j \Big (\sum_{m_i[s_j]<j\leq m_i[s_{j+1}]} 2^{-K(A\restr_j)[s_j]}\Big ) \leq \texttt{wgt}(U).
\]
Hence the weight of the requests that are enumerated in $N_i$ 
in the latter manner (i.e.\ via the movement of $m_i$) is bounded by
$2^{-2}$. Since we established the same bound for the weight of the requests that
are enumerated in $N_i$ via the first manner 
(i.e.\ via  (\ref{eq:tsubroureprele})) it follows that
 $\mathtt{wgt}(N_i)\leq 2^{-2}+2^{-2}=2^{-1}$.
\end{proof}

The following lemma is essential in showing that $\emptyset'\leq_T B$.
The proof of it,
uses the fact that each $N_i$ is a prefix-free machine,
which was established in Lemma \ref{le:nibound}.
\begin{lem}\label{le:miconva}
For each $i$, marker $m_i$ is defined, injured only finitely many times and reaches a limit;
\end{lem}
\begin{proof}
We argue by induction on $i$.
In order to conclude the induction step and the proof of this lemma, it suffices
to show that $m_{i+1}$ will reach a limit.
By the induction hypothesis, $m_{i+1}$
stops being injured after stage some $s_0$.
Hence $c_{i+1}$ reaches a limit at $s_0$.
Since $A$ is not $K$-trivial there is some least $n$ such that
$K_{N_{i+1}}(A\restr_n)>K(n)+c_{i+1}$. Let $s_1>s_0$ be a stage where 
the approximations to $A\restr_n$ and $K(j)$, $j\leq n$ 
have settled. If marker $m_{i+1}$ moved after stage $s_1$ 
the construction would enumerate an $N_{i+1}$-description of $A\restr_n$
of length $K(n)+c_{i+1}[s_1]$ which contradicts the choice of $n$.
Hence $m_{i+1}$ reaches a limit by stage $s_1$ and this 
concludes the induction step and the proof.
\end{proof}

We define $(S_i)$ exactly as in the discussion of Section \ref{subs:steptosol}.
Let $S_0$ be the set of strings in the domain of $U$ 
that are used at least one time. More generally for each $k\geq 0$ we let $S_k$ be the set of 
descriptions in the domain of $U$ which 
are used at least $k+1$ times. Note that $S_{i+1}\subseteq S_i$ for
each $i$. According to the 
correspondence between the domains of $U$ and $M$, a string $\sigma$
in the domain of $U$ that is used $k$ times incurs weight $k\cdot 2^{-|\sigma|}$ 
to the domain
of $M$. Hence (\ref{eq:Mwbou}).
\begin{equation}\label{eq:Mwbou}
\mathtt{wgt}(M)\leq \sum_k \mathtt{wgt}(S_k).
\end{equation}
Note that in the above sum each description in $S_k$ is counted $k+1$ times,
since it is also a member of $S_j$ for $j<k$.
A $U$-description $\sigma$ is called {\em active}
at stage $s$ if $U(\sigma)[s]\subseteq A_s$.
By the construction, all descriptions that enter $S_1$ at
some stage $s$ are active at that stage. More generally, at any given stage $s$, only
strings that are active at stage $s$ may move
from $S_k$ to $S_{k+1}$ at stage $s$. 

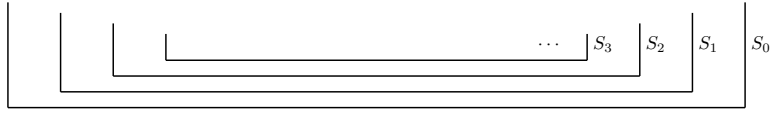
\begin{figure}
 \scalebox{0.7}{
\begin{tikzpicture}
\draw[black, thick] (-7,0)  -- (7,0);
\draw[black, thick] (-7,0)  -- (-7,2);
\draw[black, thick] (7,0)  -- (7,2);
\draw[black, thick] (-6,0.3)  -- (6,0.3);
\draw[black, thick] (-6,0.3)  -- (-6,1.8);
\draw[black, thick] (6,0.3)  -- (6,1.8);
\draw[black, thick] (-5,0.6)  -- (5,0.6);
\draw[black, thick] (-5,0.6)  -- (-5,1.6);
\draw[black, thick] (5,0.6)  -- (5,1.6);
\draw[black, thick] (-4,0.9)  -- (4,0.9);
\draw[black, thick] (-4,0.9)  -- (-4,1.4);
\draw[black, thick] (4,0.9)  -- (4,1.4);
\node  at (7.3,1.2) {$S_0$};
\node  at (6.3,1.2) {$S_1$};
\node  at (5.3,1.2) {$S_2$};
\node  at (4.3,1.2) {$S_3$};
\node  at (3.3,1.2) {$\cdots$};
\end{tikzpicture}}
\caption{{\small Infinite nested decanter model.}}
\label{fig:treq}
\end{figure}
The sets $S_k$ may be visualized as the nested containers of the
infinite decanter model of Figure \ref{fig:treq}. As the figure indicates,
descriptions that are currently in container $S_k$ may enter container $S_{k+1}$ while they
continue to be members of $S_k$. In particular, once a description enters a container it will remain in that
container indefinitely.
If at some stage a marker $m_i$ moves (while $m_j, j<i$ remain stable),
some strings of $S_k$ enter $S_{k+1}$ for various $k\in\Nat$.
Indeed, when $m_i$ moves it enumerates its former value into $B$. This action
changes the approximation to $B$, which in turn causes some descriptions to be used
an additional time. By the definition of the sets $(S_k)$, this means 
that some of the strings in some containers enter the next container. In this case we may say 
that these strings
were reused by $m_i$ (since they were used an additional time). 
In order to calculate a suitable upper bound for each
$\mathtt{wgt}(S_k)$ we need Lemma \ref{eq:bfolocoAnsk}.
\begin{lem}\label{eq:bfolocoAnsk}
If during the interval of stages $[s,r]$ 
a marker $m_n$  is not injured 
and $n\not\in\emptyset'_r$
then the weight of the strings that $m_n$ reuses during
this interval which remain active at stage $r$ is at most $2^{-c_n[s]}$.
\end{lem}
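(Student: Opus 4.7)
The plan is to match the movements of $m_n$ during $[s,r]$ to $N_n$-descriptions of distinct initial segments of $A_r$, and then to control the reused-active weight at each movement by the corresponding enumeration weight into $N_n$.

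Since $m_n$ is not injured during $[s,r]$, no $m_j$ with $j<n$ moves in this interval; hence $c_n$ stays equal to $c_n[s]$ and the current copy of $N_n$ is never reset. Since $n\not\in\emptyset'_r$, clause (a) never triggers for $m_n$ in this interval, and I may assume that $m_n[s]\not\in B_s$ (otherwise $m_n$ cannot move in $[s,r]$ and the conclusion is trivial). Hence every movement of $m_n$ during $[s,r]$ is via clause (b); enumerate them as $s_1+1<s_2+1<\cdots$\,. Since each new value $m_n[s_i+1]$ is a \emph{large} number exceeding $s_i$, the position ranges $(m_n[s_i],s_i]$ from which reused $U$-descriptions arise at the successive movements are pairwise disjoint.

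Call the $i$th movement \emph{useful} if at least one $U$-description reused at stage $s_i+1$ is still active at $r$. By (\ref{eq:timirele}) we have $t_n[s_i]<m_n[s_i]$, so the useful condition forces $A_r\restr_{t_n[s_i]}=A_{s_i+1}\restr_{t_n[s_i]}$. By (\ref{eq:ifballpenumNax}) the construction enumerates at $s_i+1$ an $N_n$-description $\tau_i$ of $A_{s_i+1}\restr_{t_n[s_i]}$ of length $K(t_n[s_i])[s_i+1]+c_n[s]$; for useful $i$, $\tau_i$ therefore describes the initial segment $A_r\restr_{t_n[s_i]}$ of $A_r$. Next, I would show that for useful $i_1<i_2$ one has $t_n[s_{i_1}]<t_n[s_{i_2}]$: once $\tau_{i_1}$ is enumerated, subroutine (\ref{eq:tsubroureprele}) keeps an $N_n$-description of the current approximation to $A\restr_{t_n[s_{i_1}]}$ of length at most $K(t_n[s_{i_1}])+c_n[s]$ alive at every subsequent stage up to $s_{i_2}$, and the useful hypothesis at $i_1$ combined with (\ref{eq:tmovmonle}) ensures that $A$ does not change on this prefix during $[s_{i_1}+1,s_{i_2}]$. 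Therefore $t_n[s_{i_2}]$, defined as the least length where $N_n$ fails, is strictly larger than $t_n[s_{i_1}]$. The numbers $t_n[s_i]$ are thus distinct for useful $i$, which yields
\[
\sum_{i\text{ useful}}q_n[s_i]
=2^{-c_n[s]}\sum_{i\text{ useful}}2^{-K(t_n[s_i])[s_i+1]}
\leq 2^{-c_n[s]}\,\mathtt{wgt}(U)<2^{-c_n[s]-2}.
\]

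The final step, which I expect to be the main technical obstacle, is to bound the weight of reused-active descriptions at a useful $s_i+1$ by a bounded multiple of $q_n[s_i]$. Clause (b) only guarantees that the weight of the range $(m_n[s_i],s_i]$ first crosses the threshold $q_n[s_i]$ at that stage; I would control the overshoot by comparing this sum with the sum at stage $s_i$, which was strictly below the previous threshold, the difference being accounted for by the single new index $j=s_i$ entering the range and by the $K$-decreases contributing new $U$-weight between the two stages. Once this dyadic overshoot estimate is in hand, combining it with the disjointness of the ranges $(m_n[s_i],s_i]$ and with the displayed bound on $\sum_{i\text{ useful}}q_n[s_i]$ yields the required bound $2^{-c_n[s]}$ on the total reused-active weight.
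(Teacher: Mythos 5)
Your overall strategy matches the paper's: you track the stages $(s_i+1)$ where $m_n$ moves, observe that a ``useful'' move (one whose reused descriptions survive to stage $r$) forces, via (\ref{eq:timirele}), (\ref{eq:tmovmonle}) and the $N_n$-enumeration (\ref{eq:ifballpenumNax}), that $t_n$ strictly increases and $c_n$ stays put, so that $\sum_i q_n[s_i]\leq 2^{-c_n[s]}\sum_t 2^{-K(t)}\leq 2^{-c_n[s]}\cdot\mathtt{wgt}(U)$. That part is sound.

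The genuine gap is exactly where you flag it, and the ``overshoot'' route you sketch is not the right fix. The quantity that clause (b) forces to cross the threshold at stage $s_i+1$ is $\sum_{m_n[s_i]<j\leq s_i}2^{-K(A\restr_j)[s_i+1]}$, and this can exceed $q_n[s_i]$ by an unbounded amount in a single step (many $K(A\restr_j)$ can drop simultaneously at $s_i+1$), so there is no bounded-multiple ``dyadic overshoot'' estimate to be had. The correct observation, which the paper makes, is that no overshoot bound is needed at all: the $U$-descriptions that are \emph{reused} at stage $s_i+1$ are, by definition, ones already used by $M$-computations issued at stages $\leq s_i$. Since $K$ is non-increasing in stages, each such $U$-description of $A\restr_j$ has length at least $K(A\restr_j)[s_i]$; moreover, since $m_n$ did not require attention at stage $s_i$ (it moved to a large value at $s_i+1$, so it cannot have moved at $s_i$), and since $t_n,c_n,q_n$ are unchanged between $s_i-1$ and $s_i$, the failure of clause (b) at stage $s_i$ gives $\sum_{m_n[s_i]<j\leq s_i-1}2^{-K(A\restr_j)[s_i]}<q_n[s_i]$. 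Hence the reused-and-active weight at $s_i+1$ is strictly less than $q_n[s_i]$ with no correction term. That is the missing step; once it is in place the summation you already wrote finishes the proof.
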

\begin{proof}
Note that by the assumption, parameter $c_n$ remains constant throughout the interval
$[s,r]$.
Suppose that $m_n$
moves at stage $x+1\in [s,r]$, after requiring attention. 
Then since the markers always move to {\em large} values
it follows that $m_n$ did not move at stage $x$. 
Recall the definition of when $m_n$ requires attention, which was given in
Section \ref{subse:deparcon} (the clauses (a) and (b)).
Since $m_n$ did not move at stage $x$, it follows 
that it did not require attention at that stage and by clause (b) 
of Section \ref{subse:deparcon} we get 
\begin{equation}\label{eq:mnpayprevdesc}
\sum_{j>m_n[x]} 2^{-K(A \restr_j)[x]} < q_n[x].
\end{equation}
Note that when $m_n$ moves at stage $x+1$,  
the weight of the $U$-descriptions
that it reuses is at most
$\sum_{j>m_n[x]} 2^{-K(A \restr_j)[x]}$ (and not $\sum_{j>m_n[x]} 2^{-K(A \restr_j)[x+1]}$).
This happens because the construction first moves marker $m_n$ and then enumerates
additional computations in $M$. In other words, the descriptions that $m_n$ reuses at $x+1$ 
correspond to $M$-computations that occurred in the previous stages, not the $M$-computations
that may occur by the end of stage $x+1$. 
Hence by \eqref{eq:mnpayprevdesc}, 
the weight of the $U$-descriptions that are reused by $m_n$ at stage $x+1$
are bounded by $q_n[x]$, which is $2^{-K(t_n)[x]-c_n[x]}$.

Now let us consider the overall effect of the movement of $m_n$ 
in the interval of stages $[s,r]$.
If at least one of the descriptions in $U$ that $m_n$ 
reused at some stage  $x+1\in [s,r]$
continues to be active at stage $r$,
then  $A_{x+1}\restr_{m_n[x]+1}=A_r\restr_{m_n[x]+1}$.
By (\ref{eq:timirele}), under the same assumptions this implies
\[
A_{x+1}\restr_{t_n[x]+1}=A_r\restr_{t_n[x]+1}.
\]
By \eqref{eq:ifballpenumNax} of the construction 
(i.e.\ the enumeration of a computation
in $N_i$ upon the movement of a marker), 
since at stage $x+1$ the marker $m_n$ moved, we have
$t_n[x+1]=t_n[x]+1$.
Hence by (\ref{eq:tmovmonle}) we get that
\[
t_n[y]\geq t_n [x+1]>t_n[x] \ \ \ \textrm{for all $y\in [x+1, r]$.}
\]
The above observation along with the bound that we established 
in the previous paragraph
on the weight of the $U$-descriptions that are reused by $m_n$ at  a stage in
$[s, r]$ , imply the following fact.
\begin{equation*}
\parbox{10.7cm}{During the stages in
$[s, r]$ the weight of the descriptions in $U$
 that $m_n$ reuses 
and remain active at stage $r$,  
is each time bounded by $2^{-K(t_n)-c_n}$,
where $t_n$ is larger and larger and $c_n$ remains equal to $c_n[s]$}
\end{equation*}
(while the Kolmogorov function follows its usual approximation). 
Formally, if $y_j, j<k$ are the stages in $[s, r]$ where
marker $m_n$ moves, we have $t_n[y_j]< t_n[y_{j+1}]$ and the
weight of $U$-descriptions that $m_n$ reuses at stage $y_j$ and remain active 
at stage $r$ is at most
$2^{-K(t_n)[y_j]-c_n[s]}$. So the total weight of the $U$-descriptions that $m_n$ uses
during the stages in $[s, r]$ and which remain active at stage $r$ is less than
\[
\sum_i 2^{-K(i)-c_n[s]}.
\]
Since the above sum is bounded by $2^{-c_n[s]}$, this
concludes the proof.
\end{proof}

\begin{lem}\label{le:Mbound}
The weight of the requests that are enumerated in $M$ is finite.
\end{lem}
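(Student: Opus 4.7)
The argument bounds $\mathtt{wgt}(M)$ by bounding each term in \eqref{eq:Mwbou}. First, $\mathtt{wgt}(S_0)\leq \mathtt{wgt}(U)<2^{-2}$, since $S_0$ is a subset of the domain of $U$. The bulk of the work is to bound $\mathtt{wgt}(S_k)$ for $k\geq 1$.

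Following the scheme outlined in Section \ref{subs:steptosol}, I would define $S_k^z$ to be the set of $U$-descriptions in $S_k$ whose first reuse (entry into $S_1$) was due to the movement of $m_z$, so that $S_k$ is the disjoint union of the $S_k^z$ over $z$. The key combinatorial observation is that $S_k^z=\emptyset$ whenever $z<k-1$. To see this, suppose $\sigma$ enters $S_{j+1}$ via the movement of $m_x$ at stage $s_1$ while it previously entered $S_j$ via the movement of $m_y$ at some $s_0<s_1$, and set $n=|U(\sigma)|$. Both movements require the mover's position at the previous stage to be less than $n$. If $y<x$ then the move of $m_y$ at $s_0$ injures $m_x$, causing it to be reset to a fresh \emph{large} value exceeding $n$; if $y=x$, then monotonicity of the marker values (across any injuries in between) forces $m_x[s_1-1]$ to be at least this same fresh value; in both cases $m_x[s_1-1]>n$, contradicting the requirement for $m_x$ to reuse $\sigma$ at $s_1$. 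Hence $y>x$, and iterating $k$ times forces the first reuser of $\sigma$ to have index at least $k-1$; in particular $S_k\subseteq \bigcup_{z\geq k-1} S_k^z$ and $S_k^z\subseteq S_1^z$, so it suffices to bound $\mathtt{wgt}(S_1^z)$.

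To bound $\mathtt{wgt}(S_1^z)$, I would partition the history of $m_z$ into the maximal intervals between consecutive injuries, its \emph{lives}; on the $j$-th life, $c_z$ is constant and at least $z+j+2$ by the construction. Lemma \ref{eq:bfolocoAnsk} directly bounds the weight of the $U$-descriptions reused by $m_z$ during the $j$-th life that remain active at the close of the life by $2^{-c_z^{(j)}}$. To cover also those $U$-descriptions that $m_z$ reuses during this life but which become inactive before its end, I would apply Lemma \ref{eq:bfolocoAnsk} at each stage immediately preceding an $A$-change that invalidates such a description; the strict monotonicity $t_z[y_j]<t_z[y_{j+1}]$ at successive movements of $m_z$ (established within the proof of Lemma \ref{eq:bfolocoAnsk}) together with the fact that, since $A$ is c.e., each $U$-description is invalidated at most once, allow these bounds to be patched into a single bound of $2^{-c_z^{(j)}}$ for the entire $j$-th life. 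Summing geometrically over lives yields
\[
\mathtt{wgt}(S_1^z)\leq \sum_{j\geq 1} 2^{-(z+j+2)} = 2^{-z-2}.
\]

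Combining everything, since $S_k^z\subseteq S_1^z$,
\[
\mathtt{wgt}(M)\leq \mathtt{wgt}(S_0) + \sum_{k\geq 1}\sum_{z\geq k-1}\mathtt{wgt}(S_1^z) \leq 2^{-2} + \sum_{z\geq 0}(z+1)\cdot 2^{-z-2},
\]
which is finite, as required. The main obstacle is the extension of Lemma \ref{eq:bfolocoAnsk}, whose statement only bounds the active-at-$r$ portion of the reused descriptions, to cover all of $S_1^z$; handling descriptions that become inactive mid-life without inflating the per-life bound is the step that requires the most care and where the c.e.\ hypothesis on $A$ and the monotonicity of $t_z$ at movements are jointly exploited.
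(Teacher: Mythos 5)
Your proposal diverges from the paper's proof at a crucial step, and the divergence introduces a genuine gap.

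You correctly observe that $S_k^z\subseteq S_1^z$ and then reduce the problem to proving $\mathtt{wgt}(S_1^z)\leq 2^{-z-2}$. The paper does something sharper: for $k>1$ it uses the stronger inclusion $S_k^z\subseteq S_2^z$ and bounds the weight of strings that \emph{enter $S_2^z$ from $S_1^z$}, rather than bounding all of $S_1^z$. This matters because entries into $S_2^z$ occur only at stages where some $m_j$ with $j<z$ moves, i.e.\ at injury stages of $m_z$, and at such a stage only strings that are \emph{currently active} can be reused. Thus Lemma \ref{eq:bfolocoAnsk} (which bounds the weight of the reused-and-still-active strings by $2^{-c_z[s]}$) applies directly, and the $c_z$'s increment by one at each injury, making the sum over injury stages geometric with initial term $2^{-c_z[0]}=2^{-z-3}$.

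Your version instead tries to bound $\mathtt{wgt}(S_1^z)$, which requires accounting for strings $m_z$ reuses that later become inactive. These are exactly the strings Lemma \ref{eq:bfolocoAnsk} excludes. Your ``patching'' proposal is to apply Lemma \ref{eq:bfolocoAnsk} just before each $A$-change and to glue the resulting bounds together using the strict monotonicity of $t_z$; but this monotonicity is only a consequence of \eqref{eq:tmovmonle}, i.e.\ it holds precisely \emph{as long as $A$ does not change below $t_z$}. At the stages you are patching over, $A$ changes and $t_z$ may drop, so the successive bounds do not telescope and the per-life estimate $2^{-c_z^{(j)}}$ does not follow. In fact the claim $\mathtt{wgt}(S_1^z)\leq 2^{-z-2}$ appears to be false in general: the total weight that $m_z$ ever reuses within a single life is controlled only by the $N_z$-weight contributed by movements, which Lemma \ref{le:nibound} bounds by $\mathtt{wgt}(U)\approx 2^{-2}$ rather than by $2^{-c_z}$ (the disjoint-position-intervals argument gives no $c_z$-dependent decay when $t_z$ is not monotone). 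With such a $z$-independent bound your final double sum $\sum_{k\geq 1}\sum_{z\geq k-1}\mathtt{wgt}(S_1^z)$ would diverge. The fix is to follow the paper's route: bound the flux from $S_1^z$ into $S_2^z$, using the fact that this flux consists only of active strings at injury stages.
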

\begin{proof}
According to the correspondence between the domains of $U$ and $M$ that we discussed,
we can use \eqref{eq:Mwbou} to bound the weight of $M$. Note that each description in $S_k$ is
counted $k+1$ times in this sum as it belongs to all $S_i, i\leq k$. 
Since only strings in the domain of $U$ are used,
$\mathtt{wgt}(S_0)<2^{-2}$.
So it suffices to show that
\begin{equation}\label{eq:boundonsk}
\texttt{wgt}(S_k)< 2^{-k-1} \ \ \textrm{for each $k>0$.}
\end{equation}
Since $S_1\subseteq S_0$, condition \eqref{eq:boundonsk} holds for $k=1$.  
Let $k>1$.
Every entry of a string into $S_{k}$ 
is due to a marker $m_x$ which reused it when it was already in
$S_{k-1}$. Since $k>1$, this string entered $S_{k-1}$ due to another marker 
$m_y$ with $y>x\geq 0$.
Inductively, that string entered $S_1$ due to a marker 
$m_z$ with $z\geq k-1$. 
Fix $z$, and let $S_k^z$ contain the strings in $S_k$ that entered $S_1$
due to marker $m_z$. 
Then $S_k=\cup_{z\geq k-1} S^z_k$
and $S_{k+1}^z \subseteq S_k^z$ 
for each $k>1$. Hence
\[
\texttt{wgt}(S_k)\leq \sum_{z\geq k-1} \texttt{wgt}(S^z_k)\ \ \textrm{for each $k>1$.}
\]
So in order to prove \eqref{eq:boundonsk} for $k>1$ it suffices to show that
\begin{equation}\label{Szkboundf}
\texttt{wgt}(S^z_k)< 2^{-z-2}\ \ \textrm{for each $z\geq 0$.} 
\end{equation}
Let $(s_i)$ be the increasing sequence of 
stages where $m_z$
is injured. Note that at this point we do not assume that
$(s_j)$ is a finite sequence.
We may count the weight of $S_k^z$
by counting the weight of the bunches of descriptions that
moved to $S^z_1$ and then  moved to $S^z_2$
(necessarily by some $m_j$ with $j<z$). 
This is justified because every description that enters
$S_k^z$ must have passed from $S_2^z$ first.

Since the movement of a marker $m_i$ injures all $m_j, j>i$,
the only stages were strings move from 
$S^z_1$ to $S^z_{2}$
are the stages $(s_i)$.
Moreover 
since only active strings move from
$S^z_1$ to $S^z_{2}$ at stage $s_i$, according to Lemma \ref{eq:bfolocoAnsk}
(applied  to the intervals $[s_i+1, s_{i+1}-1]$)
their weight is bounded by $2^{-c_z[s_i-1]}$.
So the weight of the strings that enter
$S^z_{2}$ from $S^z_1$ is bounded above by $\sum_j 2^{-c_z[s_j-1]}$.
Since $c_z[s_{j+1}-1]=c_z[s_j]>c_z[s_{j}-1]$ for
all $j$,  this weight is bounded by $\sum_j 2^{-c_z[0]-j}=2^{-c_z[0]+1}$.
Since $c_z[0]=z+3$ this bound becomes
$2^{-z-2}$, which establishes \eqref{Szkboundf}.
\end{proof}
\noindent
We conclude with the proof that (\ref{eq:negreqkb}) is met.
\begin{lem}\label{le:miconv}
For each $i$ there is an $M$-description of $B\restr_i$ of length $\leq K(A\restr_i)$.
\end{lem}
\begin{proof}
We argue by  induction on $i$.
Suppose that the lemma holds for $i\in\Nat$. Then by Lemma \ref{le:miconva}, 
there is some stage
$s_0$ at which the approximations to $A\restr_{i+1}, B\restr_{i+1}$, 
$K(A\restr_{i+1})$, $K_M(B\restr_i)$ and
$m_i$ have 
settled and $K_M(B\restr_i)[s_0]\leq K(A\restr_i)[s_0]$.
If $K_M(B\restr_{i+1})[s_0]> K(A\restr_{i+1})[s_0]$ the construction
at stage $s_0+1$ will enumerate
an $M$-computation that describes
$B\restr_{i+1}$ with a string of length $K(A\restr_{i+1})$.
\end{proof} 
\noindent
By Lemma \ref{le:miconva} and the construction we get
that the movement of the markers satisfies properties 
(i)-(v) of Section \ref{se:prothlowisa}.
Hence $\emptyset'\leq_T B$. We conclude the proof of Theorem
\ref{th:lowiscom}
by observing that \eqref{eq:negreqkb} is met.
By Lemma \ref{le:miconv} the construction enumerates the required requests in $M$
which ask for a description of $B\restr_i$ 
with a string of length at most $K(A\restr_i)$, for each $i$.
On the other hand Lemma \ref{le:Mbound} establishes that this request set corresponds to
a prefix-free machine, via the Kraft-Chaitin lemma. Hence \eqref{eq:negreqkb} is met,
which concludes the verification of the construction
and the proof of Theorem \ref{th:lowiscom}.

\section{Proof of Theorem \ref{th:lowismulticom}}\label{subse:profthB}
Let $A,D$ be two computably enumerable sets which are not $K$-trivial.
For the proof of Theorem \ref{th:lowismulticom} it suffices to construct 
a computably enumerable set $B$
such that $B\leq_K A$, $B\leq_K D$ and $B\equiv_T\emptyset'$ .
This follows from the downward density of the c.e.\ $K$-degrees
as we discussed in Section \ref{subse:profthA}.
The coding of
$\emptyset'$ into $B$ will be done via the markers $(m_i)$ 
and  
the relations $B\leq_K A$, $B\leq_K D$ will be achieved
with the construction of two prefix-free 
machines $M_a, M_d$ respectively such that
\begin{equation}\label{eq:neduagreqkb}
K_{M_a}(B\restr_n)\leq K(A\restr_n)\  \textrm{\ \ \ and \ \ \ } 
K_{M_d}(B\restr_n)\leq K(D\restr_n)\textrm{\ \ \ for all $n$.}
\end{equation}

\subsection{Merging two constructions}\label{subse:merge}
The basic plan of the construction of $M_a, M_d$ is to
merge a construction for $M_a$ of the type that was
given in Section \ref{subse:profthA} with a construction
for   $M_d$ of the same type.
Note that we will have a single set of 
markers $m_i$ but their movement 
will be stimulated by both requirements in
(\ref{eq:neduagreqkb}).
We will use the same set of constants
$c_i$ for both $A$ and $D$, since their values only
depend on the movement of the markers on $B$.
However for each $i$ we have $N_i^{a}, N_i^{d}$ instead of
$N_i$. 
At each stage $s$ we let $t^a_i[s]$ 
be the least number $x$ such that
$K_{N^a_i}(A_{s+1}\restr_x)[s]> K(x)[s+1]+c_i[s]$
and we let $t^d_i[s]$ 
be the least number $y$ such that
$K_{N^d_i}(D_{s+1}\restr_y)[s]> K(y)[s+1]+c_i[s]$.
For each $i\in\Nat$ we set $c_i[0]=i+4$.
The universal machine $U$ 
and the notion of injury of a marker remains the same. 
In particular, if at some stage $s$
marker $m_k$ moves while $m_j$, $j<k$ remain constant 
this causes $m_i$, $i>k$ to be injured.
This means that $m_i$, $i>k$ become undefined
and the values of $c_i, i>k$ increase by 1.

At each stage $s+1$ the machines $N_i^a, N_i^d$ will be adjusted according to
changes of $K(n)$ for $n<t_i[s]$. This is done by running 
subroutine (\ref{eq:tsubrodouule}) (which is analogous to
\eqref{eq:tsubroureprele} of the argument in Section \ref{subse:profthA}). We define
\[
q^a_i[s]=2^{-K(t^a_i)[s]-c_i[s]}\hspace{0.5cm}\textrm{and}\hspace{0.5cm}
q^d_i[s]=2^{-K(t^d_i)[s]-c_i[s]}.
\]
The thresholds $q^a_i[s]$, $q^d_i[s]$
play a similar role as $q_i[s]$ in the argument of
Section \ref{subse:profthA}.
However since $K(A\restr_n)$ and
$K(D\restr_n)$ may differ for various $n$, the definition of
a marker requiring attention will be modified, as we elaborate in
Section \ref{subse:lackunifsol}.

\subsection{Lack of uniformity and solution}\label{subse:lackunifsol}
The main issue that we have to deal with when we merge
two constructions of the type used in Section \ref{subse:profthA}
which depend on different c.e.\ sets $A,D$
is that the thresholds $q^a_i, q^d_i$ that correspond to some
marker $m_i$ may have different values.
Hence the marker may be motivated to move by $M_a$ but not by $M_d$.
This lack of uniformity has an impact in the calculations of the weight 
of the machines $N_i^a, N_i^d$, which in turn affects a verification along the lines
of Section \ref{subse:versingt}.

The solution to this obstacle
is to use the additional parameters $p^a_i, p^d_i$
which record  
the weight of the $M_a$ or $M_d$ descriptions respectively that
were reissued when only $M_d$ or $M_a$ respectively motivated the movement
of marker $m_i$.
For example, at some stage $s+1$ we may have
$\sum_{m_i[s]<j\leq s} 2^{-K(A\restr_j)[s]} \geq q^a_i[s]$
but this may not hold for $D$ in place of $A$ and 
$q^d_i[s]$ in place of $q^a_i[s]$.
This means that at this stage $M_a$ requires the movement of $m_i$ but
$M_d$ does not.
At such a stage we will move $m_i$ for the sake of $M_a$, 
also enumerating an $N_i^a$-description of $t^a_i[s]$ of length
$K(t^a_i)[s]+c_i[s]$.
However an
enumeration of an $N_i^d$-description of $t^d_i[s]$ of length
$K(t^d_i)[s]+c_i[s]$ is not justified
and will not take place. Instead, we will store the value 
$\sum_{m_i[s]<j\leq s} 2^{-K(D\restr_j)[s]}$
into $p_i^d[s+1]$, which is the weight of the $M_d$ descriptions we need to reuse
due to the movement of $m_i$ at stage $s+1$. At
the next stage the
threshold in the condition for 
the movement of $m_i$ for the sake of $D$ will be
$q^d_i[s+1]-p_i^d[s+1]$. As long as $m_i$ moves for the sake of 
$M_a$ the value of $p_i^d$ will keep on increasing, recording
 the weight of the $M_d$ descriptions that we need to pay
due to the $M_a$-motivated movements of $m_i$.
When $m_i$ moves for the sake of $M_d$, the value of
$p_i^d$ will drop to 0 and the enumeration into
$N_i^d$ will be justified. The same holds symmetrically for
$A$ with $q^a_i$
and $p^a_i$.
With this amendment a combined construction can be verified
along the lines of the argument of Section
\ref{subse:versingt}.

According to the above motivation, we say
that the marker $m_i$ requires attention at stage $s+1$ if
$m_i[s]$ is defined, 
$m_i[s]\not\in B_{s}$ and one of the following occurs:
\begin{itemize}
\item[(a)] $i\in \emptyset'_{s+1}$;
\item[(b)] $\sum_{m_i[s]<j\leq s} 2^{-K(A\restr_j)[s]} \geq q^a_i[s]-p^a_i[s]$;
\item[(c)] $\sum_{m_i[s]<j\leq s} 2^{-K(D\restr_j)[s]} \geq q^d_i[s]-p^d_i[s]$;
\end{itemize}
The condition $m_i[s]\not\in B_{s}$ in the above definition can be justified
as the same condition was justified in the construction of Section 
\ref{subse:constr1s} (see the discussion in the end of Section \ref{subse:deparcon}).
The definition of a {\em large} number is as in the argument of Section \ref{subse:profthA}.
Recall that the parameters $q^a_i, q^d_i$ are defined in terms of the given sets $A,D$ (and the
universal machine $U$) while the parameters 
$p^a_i, p^d_i$ are defined dynamically within the construction. We define 
$p^a_i[0]=p^d_i[0]=0$.

\subsection{Construction of \texorpdfstring{$B, M_a, M_d, N_i^a, N_i^d$}{B,Ma,Md,Na,Nd}}
At stage 0 place $m_0$ on $1$.
At stage $s+1$ run subroutine (\ref{eq:tsubrodouule}) for $(X,x)\in\{(A,a), (D,d)\}$.
\begin{equation}\label{eq:tsubrodouule}
\parbox{11cm}{For each $i\leq s$,  
if $K(k)[s+1]<K(k)[s]$
for some $k< t^x_i[s]$ 
 then enumerate an $N^x_i$-description
of $X_{s+1}\restr_k$ of length $K(k)[s+1]+c_i[s]$.}
\end{equation}
Let $z_a, z_d$ be the least numbers $\leq s$ 
such that 
\[
K_{M_d}(B\restr_{z_a})[s]>K(A\restr_{z_a})[s+1]\hspace{0.5cm}
\textrm{and}\hspace{0.5cm} K_{M_d}(B\restr_{z_d})[s]>K(D\restr_{z_d})[s+1].
\]
If none of the currently defined markers requires attention,
let $n$ be the largest number such that $m_n[s]$ is undefined, 
 and
\begin{itemize}
\item if $n< z_a$ and $n<z_d$, place $m_{n}$ on the least {\em large} number;
\item  otherwise enumerate an $M_a$-description of $B_{s}\restr_{z_a}$ of length
$K(A\restr_{z_a})[s+1]$ and
an $M_d$-description of $B_{s}\restr_{z_d}$ of length
$K(D\restr_{z_d})[s+1]$;
\item end this stage.
\end{itemize}
Otherwise let $n$ be the least number such that $m_n$ requires attention,
put $m_n[s]$ into $B$, let $m_n[s+1]$ be a {\em large} number and for each $k<s$
with $k>m_n[s]$
\begin{itemize}
\item if $K_{M_a}(B\restr_k)[s]\leq K(A\restr_k)[s+1]$
enumerate an $M_a$ description of $B_{s+1}\restr_k$ of length $K(A\restr_k)[s+1]$;
\item if $K_{M_d}(B\restr_k)[s]\leq K(D\restr_k)[s+1]$
enumerate an $M_d$-description of $B_{s+1}\restr_k$ of length $K(D\restr_k)[s+1]$.
\end{itemize}
Moreover for each $j> n$
\begin{itemize}
\item declare $m_j[s+1]$ undefined and reset machines
$N^a_j$, $N^d_j$; 
\item set $c_j[s+1]=c_i[s]+1$ and $p_j^a[s+1]=p_j^d[s+1]=0$. 
\end{itemize}
Finally for $(X,x)\in\{(A,a), (D,d)\}$ consider the action
\begin{equation*}
\parbox{12.2cm}{($\ast$)\ enumerate an $N^x_n$-description of $X\restr_{t^x_n[s]}[s+1]$ of
length $K(t^x_n[s])[s+1]+ c_n[s]$.}
\end{equation*}
and do the following, according to whether clauses (b), (c)
of Section \ref{subse:lackunifsol} hold:
\begin{itemize}
\item If (b), (c) hold, for $(X,x)\in\{(A,a), (D,d)\}$ execute ($\ast$)  
and set $p^x_n[s+1]=0$;
\item otherwise, if (b) holds, execute ($\ast$)
for $(X,x)=(A,a)$ and set $p^a_n[s+1]=0$,
$p_n^d[s+1]=p_n^d[s]+\sum_{m_n[s]<j\leq s} 2^{-K(D\restr_j)[s+1]}$;
\item otherwise, if (a) holds
execute ($\ast$)
for $(X,x)=(D,d)$ and
set $p^d_n[s+1]=0$, $p_n^a[s+1]=p_n^a[s]+
\sum_{m_n[s]<j\leq s} 2^{-K(A\restr_j)[s+1]}$.
\end{itemize}
End this stage.

\subsection{Verification}
As in the verification of Section \ref{subse:profthA}
we have (\ref{eq:timidrele}).
\begin{equation}\label{eq:timidrele}
\parbox{10cm}{For all $i,s$, if $m_i[s]$ is defined then $t^a_i[s]< m_i[s]$
and $t^d_i[s]< m_i[s]$.}
\end{equation}
Moreover the justification of (\ref{eq:tmovmonle})
also applies to (\ref{eq:tmovdmonle}).
\begin{equation}\label{eq:tmovdmonle}
\parbox{10cm}{If  $A_s\restr_{t^a_i[s]}=A_{s+1}\restr_{t^a_i[s]}$ then $t^a_i[s]\leq t^a_i[s+1]$.\\
If  $D_{s}\restr_{t^d_i[s]}=D_{s+1}\restr_{t^d_i[s]}$ then $t^d_i[s]\leq t^d_i[s+1]$.
}
\end{equation}
Next, we show that for each $i$ there are machines $N^a_i, N^d_i$ as
prescribed in the construction.
The proof of this fact is slightly more involved than the
corresponding fact in the argument of Section \ref{subse:profthA}
due to the amendment that was discussed in Section \ref{subse:lackunifsol}.

\begin{lem}\label{le:nibodund}
For each $i$ the weights of the requests in 
$N^a_i$ and $N^d_i$  are bounded.
\end{lem}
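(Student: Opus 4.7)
The plan is to adapt the argument of Lemma \ref{le:nibound}, carefully accounting for the bookkeeping parameters $p^a_i$ and $p^d_i$ introduced in Section \ref{subse:lackunifsol}. I will describe the argument for $N^a_i$; that for $N^d_i$ is symmetric, with $D$, $q^d_i$, $p^d_i$ and clause (c) replacing $A$, $q^a_i$, $p^a_i$ and clause (b). Fix a version of $N^a_i$ and let $[b_0,b_1)$ be the interval during which it is not reset; in this interval $c_i$ and every marker $m_j$ with $j\leq i$ are stable.

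The requests enumerated into $N^a_i$ via subroutine \eqref{eq:tsubrodouule} are handled exactly as in Lemma \ref{le:nibound}: each is uniquely associated with a pair $(k,s)$ at which $K(k)[s+1]<K(k)[s]$, has weight $2^{-K(k)[s+1]-c_i}$, and hence their total weight is at most $2^{-c_i}\cdot\texttt{wgt}(U)\leq 2^{-2}$. The remaining requests are enumerated at stages $(s_j+1)$ in $[b_0,b_1)$ at which clause (b) holds, each of weight $q^a_i[s_j]$. By the enumeration rule,
\[
p^a_i[s_j]\;+\;\sum_{m_i[s_j]<k\leq s_j} 2^{-K(A\restr_k)[s_j]}\;\geq\; q^a_i[s_j],
\]
after which $p^a_i$ resets to $0$. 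Between $s_j+1$ and $s_{j+1}+1$ the value of $p^a_i$ is built up by increments corresponding to intermediate moves of $m_i$ motivated by clause (c) only, each of the form $\sum_{m_i[s']<k\leq s'} 2^{-K(A\restr_k)[s'+1]}$.

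The key observation is that whenever $m_i$ moves it is reset to a fresh \emph{large} value exceeding the current stage. Hence the $k$-ranges of the successive increments to $p^a_i$ between two consecutive $N^a_i$-enumerations, together with the final summation range at the enumeration stage, are pairwise disjoint; moreover the aggregated $k$-ranges for distinct $j$ are also pairwise disjoint. It follows that $\sum_j q^a_i[s_j]$ is bounded by a sum of the form $\sum_k 2^{-K(A\restr_k)[\cdot]}$ over pairwise distinct indices $k$. Since for distinct $k$ the shortest $U$-descriptions of $A\restr_k$ are distinct elements of $\mathrm{dom}(U)$, this is at most $\texttt{wgt}(U)\leq 2^{-2}$. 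Combined with the subroutine bound, $\texttt{wgt}(N^a_i)\leq 2^{-1}$.

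The main technical point—and the very reason for introducing $p^a_i$ and $p^d_i$—is precisely this bookkeeping: a move of $m_i$ prompted only by $D$ nevertheless consumes $A$-side $U$-weight, which must be tracked so that the next $N^a_i$-enumeration is still backed by fresh $U$-weight without double counting. Disjointness of the $k$-ranges, which follows from the use of fresh large values at every movement of $m_i$, is what allows each $U$-description to be charged to at most one $N^a_i$-enumeration.
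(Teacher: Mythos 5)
Your proof is correct and matches the paper's own argument: the same two-part split (requests from subroutine \eqref{eq:tsubrodouule} versus clause-(b) requests), the same bound of $2^{-2}$ on each part --- the latter via disjointness of the $k$-ranges that result from moving $m_i$ to fresh large values, which the paper packages through the intervals $I_j$ and the quantities $x_j$ --- and the same total $2^{-1}$. One small slip: on $[b_0,b_1)$ only the markers $m_j$ with $j<i$ are stable (not $j\leq i$), since $m_i$ itself moves throughout this interval, which is precisely what drives the enumerations you are tracking.
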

\begin{proof}

Let $(X,x)\in\{(A,a), (D,d)\}$.
As in Section \ref{subse:profthA}, each machine $N^x_i$
is valid only as long as $m_i$ is not injured. In this way we have many copies of $N^x_i$ and
it suffices to argue about a fixed version  of it (which is relevant only in an interval of stages where
$m_i$ is not injured).

A request is enumerated into $N^x_i$ either by
by subroutine \eqref{eq:tsubrodouule} or due to the movement of a marker $m_i$.
We will bound each part of the $N_i$ requests separately and then add the two bounds.
First, we consider the 
requests that are enumerated
by subroutine \eqref{eq:tsubrodouule}.
Each such request is associated with a unique pair $(k,s)$ such that
$K(k)[s+1]<K(k)[s]$. Moreover such a request has weight 
$K(k)[s+1]+c_i$. It follows that
the total weight of these requests is bounded by $2^{-c_i}\cdot \texttt{wgt}(U)$, 
which is at most $2^{-2}$.

Let $(s_j)$ be the sequence of stages where
$m_i$ moves, inside an interval $J$ of stages $s$ where $m_i$ is not injured
and $i\not\in\emptyset'_s$. Moreover let $I_j=(m_i[s_j], m_i[s_{j+1}]]$
be the interval that marker $m_i$ crosses 
when it moves at stage $s_{j}$.
For each $j$ let
\begin{equation*} 
x_j=\sum_{n\in I_j} 2^{-K(X\restr_n)[s_j-1]}.
\end{equation*}
If $x=a$ let $(e_x)$ be clause (b)
of Section \ref{subse:lackunifsol} 
and if
$x=d$ let ($e_x$) be clause (c)
of Section \ref{subse:lackunifsol}.
Let $(k^x_j)$ be the monotone sequence of those numbers $k$ such that
at stage $s_k$ marker $m_i$ moves due to clause ($e_x$). 
According to the construction and the way we increase $p^x_i$,
the weight of the $N_i^x$ descriptions that is enumerated at
stage $s_{k^x_j}$ is bounded by 
the sum of $x_v$ for all $v\in [k^x_j, k^x_{j+1})$.
Let us explain this later fact in more detail.
In-between the stages $s_{k^x_j}$ and $s_{k^x_{j+1}}$ the weight of the descriptions that
are enumerated in 
$N_i^x$ is bounded by the increase in $p^x_i$, which in turn corresponds to
a limited part of $x_j$. At stage $s_{k^x_{j+1}}$
the parameter $p^x_i$ is set to $0$ and the overall weight of
$N_i^x$ descriptions that were issued since stage $s_{k^x_{j}}$ is bounded by $x_j$.

In this way the different weights of descriptions that are enumerated in
$N_i^x$ at the key stages $s_{k^x_j}$ correspond to disjoint parts of the domain
of the universal machine $U$, of larger or  equal weight. It follows that
the total weight that is enumerated in $N_i^x$ due to movements
 of $m_i$ during the construction
 is bounded by $2^{-2}$.
 If we combine this with the weight that is added by 
 applications of (\ref{eq:tsubrodouule}) we get
 $\mathtt{wgt}(N_i^x)<2^{-1}$.
\end{proof}
The following fact is crucial in showing that $\emptyset'\leq_T B$.
Its proof
uses the fact that each (version of) $N^a_i, N^d_i$ is a prefix-free machine,
which was established in Lemma \ref{le:nibodund}.
It is instructive to compare this proof with the proof of the analogous Lemma
\ref{le:miconv} of Section \ref{subse:profthA}, and identify the way that the non-uniformity
(i.e.\ the fact that we have to deal with two given sets $A, D$, and construct two corresponding
machines $M_a$ and  $M_d$) is dealt with.
\begin{lem}\label{le:micodaanva}
For each $i$, marker $m_i$ is defined, injured only finitely many times and reaches a limit.
\end{lem}
\begin{proof}
We argue by induction.
Suppose that the lemma holds for $i\in\Nat$. Then there is some stage
$s_0$ at which marker $m_i$ has stopped moving.
In order to conclude the induction step and the proof of this lemma, it suffices
to show that $m_{i+1}$ will reach a limit.
By the induction hypothesis, $m_{i+1}$
stops being injured after stage $s_0$.
Hence $c_{i+1}$ reaches a limit at $s_0$.
Since $A$  is not $K$-trivial, there is some least $n_a$ such that
$K_{N^a_{i+1}}(A\restr_{n_a})>K(n_a)+c_{i+1}$.
Similarly, since $D$  is not $K$-trivial, there is some least $n_d$ such that
$K_{N^d_{i+1}}(A\restr_{n_d})>K(n_d)+c_{i+1}$.

Let $s_1>s_0$ is a stage where 
\begin{itemize}
\item the approximations to $A\restr_{n_a}$ and $K(j)$, $j\leq n_a$ 
have settled; 
\item the approximations to $D\restr_{n_d}$ and $K(j)$, $j\leq n_d$ 
have settled.
\end{itemize}
Then the approximations to $t^a_{i+1}$, $q^a_{i+1}$
and $t^d_{i+1}$, $q^d_{i+1}$ also reach a limit
by stage $s_1$. In particular, the limit of $t^a_{i+1}$ is $n_a$
and the limit of $t^d_{i+1}$ is $n_d$. 

If marker $m_k$ moved after stage $s_1$,
this would be either due to clause (b)
or due to clause (c)
of Section \ref{subse:lackunifsol}. In the first case
the construction would enumerate an $N^a_{i+1}$-description of $A\restr_{n_a}$
of length $K(n_a)+c_{i+1}[s_0]$ 
and in the second case
an $N^d_{i+1}$-description of $D\restr_{n_d}$
of length $K(n_d)+c_{i+1}[s_0]$. 
The first action would contradict the choice of $n_a$ 
and the second action would contradict the choice of $n_d$.
Hence $m_k$ reaches a limit by stage $s_1$ and this 
concludes the induction step.
\end{proof}

As in the argument of
Section \ref{subse:profthA},
there is a  many-one correspondence between
the domain of $M_a$ and the domain of the universal machine 
$U$. We say that a $U$-description
is $A$-{\em used} if it corresponds to a string in the domain of 
$M_a$. Moreover it is $A$-{\em used} $n$ times if it corresponds to $n$
different strings in the domain of $M_a$. If a $U$-description that is already used at stage $s$
becomes used again at stage $s+1$ we say that it was {\em reused}.
Let $S^a_0$ contain the descriptions in $U$  that are $A$-used at least once.
For each $k>0$ let $S^a_k$ contain the 
descriptions in the domain of $U$ which 
are $A$-used at least $k+1$ times. Note that $S^a_{i+1}\subseteq S^a_i$ for
each $i$. According to the 
correspondence between the domains of $U$ and $M^a$, a string $\sigma$
in the domain of $U$ that is $A$-used $k$ times incurs weight $k\cdot 2^{-|\sigma|}$ 
to the domain
of $M^a$. Similar terminology and observations apply 
on $D$ and $M_d$.
Hence we have (\ref{eq:Mwdbou}).
\begin{equation}\label{eq:Mwdbou}
\textrm{$\mathtt{wgt}(M_a)\leq \sum_k  \mathtt{wgt}(S^a_k)$ \ \ \ and\ \ \ \
$\mathtt{wgt}(M_d)\leq \sum_k \mathtt{wgt}(S^d_k)$.}
\end{equation}
Note here that we avoided a multiplicative factor $k$ in the above sums.
This is not needed as each description in $S^a_k$ will be counted $k+1$ in the above sum
(and similarly with $S^d_k$). This, in turn, is a consequence of the fact that the sets in the sequences
$(S^a_k)$ and $(S^d_k)$ are nested.

A $U$-description is called $A$-{\em active}
at stage $s$ if $U(\sigma)[s]\subseteq A_s$.
By the construction,  only
currently $A$-active strings may move
from $S^a_k$ to $S^a_{k+1}$ 
 and only
currently $D$-active strings may move
from $S^d_k$ to $S^d_{k+1}$ at any given stage. 

The sets $S^a_k$ and $S^d_k$ 
may be visualized as the containers of two independent 
decanter models that are identical to the one illustrated
in Figure \ref{fig:treq}. 
If at some stage a marker $m_i$ moves (while $m_j, j<i$ remain stable)
some strings from $S^a_k$ enter $S^a_{k+1}$
and some strings from $S^d_k$ enter $S^d_{k+1}$ for various $k\in\Nat$.
In this case we say that these strings
were $A$-reused and $D$-reused respectively by $m_i$. 

The justification of the following lemma is  analogous 
to Lemma \ref{eq:bfolocoAnsk} of Section  
 \ref{subse:profthA}. However it also deals with the non-uniformity  
 that was discussed in Section \ref{subse:lackunifsol}, so it is not identical
 to the argument that was used in the proof of Lemma \ref{eq:bfolocoAnsk}.
\begin{lem}\label{eq:bfodlocoAnsk}
If during the interval of stages $[s,r]$ 
a marker $m_n$ is not injured and $n\not\in\emptyset'_r$
then the weight of the strings that are $A$-reused by $m_n$ during
this interval which remain active at stage $r$ is at most $2^{-c_n[r]}+p_n^a[r]$.
\end{lem}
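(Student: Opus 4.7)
The plan is to follow the structure of Lemma~\ref{eq:bfolocoAnsk}, using the parameter $p_n^a$ to absorb the non-uniformity between clauses~(b) and~(c). Throughout $[s,r]$, the parameter $c_n$ is constant (since $m_n$ is not injured), so write $c_n:=c_n[r]$. Because $n\notin\emptyset'_r$, clause~(a) never triggers on $[s,r]$, and each movement of $m_n$ at a stage $s_k\in[s,r]$ is either a \emph{$B$-stage} (clause~(b) holds) or a \emph{$C$-stage} (only clause~(c) holds).

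The first step, exactly as in Lemma~\ref{eq:bfolocoAnsk}, bounds the $A$-reuse weight at each movement stage $x+1\in[s,r]$. Because the construction reuses only the $M_a$-descriptions present at the start of stage $x+1$, this weight is at most $\sum_{j>m_n[x]}2^{-K(A\restr_j)[x]}$. Since $m_n$ did not move at stage $x$, neither (b) nor (c) held at $x$; using that $t_n^a$, $p_n^a$ and $c_n$ are unchanged between $x-1$ and $x$ (by analogs of (\ref{eq:tmovdmonle}) and (\ref{eq:tsubrodouule})), the failure of (b) yields
\[
\sum_{j>m_n[x]}2^{-K(A\restr_j)[x]} \;<\; q_n^a[x]-p_n^a[x].
\]

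The second step is a potential-function argument on $p_n^a$. At a $C$-stage $x+1$ the $A$-reuse weight equals the increment $p_n^a[x+1]-p_n^a[x]$, while at a $B$-stage $p_n^a$ is reset to~$0$. Telescoping over the movement stages of $[s,r]$, the total $C$-stage $A$-reuse equals $p_n^a[r]-p_n^a[s]+\sum_{y} p_n^a[y-1]$, where $y$ ranges over the $B$-stages in $[s,r]$. The first step bounds the $A$-reuse at $y$ plus the reset amount $p_n^a[y-1]$ by $q_n^a[y-1]$, so summing the $B$- and $C$-stage contributions gives total $A$-reuse weight on $[s,r]$ at most $\sum_{y} q_n^a[y-1]+p_n^a[r]$.

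Finally, to restrict to descriptions active at~$r$, we invoke monotonicity of $t_n^a$. At each $B$-stage $y$ the construction enumerates an $N_n^a$-description of $A\restr_{t_n^a[y-1]}$, forcing $t_n^a[y]\geq t_n^a[y-1]+1$. Call a $B$-stage \emph{active} if some $A$-reuse in its block---at $y$ itself or at a preceding $C$-stage within the block---remains active at~$r$; for such $y$, (\ref{eq:timidrele}) together with iterated applications of (\ref{eq:tmovdmonle}) show that $A\restr_{t_n^a[y-1]+1}$ is stable from $y$ through~$r$, so the values $t_n^a[y-1]$ at successive active $B$-stages are strictly increasing. Hence the active-at-$r$ portion of $\sum_{y}q_n^a[y-1]=\sum_{y}2^{-K(t_n^a[y-1])[y]-c_n}$ is at most $\sum_{i}2^{-K(i)-c_n}\leq 2^{-c_n}$, giving the desired bound $2^{-c_n}+p_n^a[r]$. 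The main obstacle is precisely this step: since no $N_n^a$-enumeration occurs at $C$-stages, one must propagate the stability of $A\restr_{t_n^a[\cdot]+1}$ forward through intervening $C$-stages via (\ref{eq:tmovdmonle}), and argue strict monotonicity only across $B$-stages that inherit an active-at-$r$ contribution from their block.
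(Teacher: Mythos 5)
Your proof is correct and takes essentially the same route as the paper: bound the per-movement reuse via the failure of clause (b) at the prior stage, use the strict increase of $t_n^a$ across the $N_n^a$-enumeration ($B$-) stages whose blocks contribute active reuse to sum the $q_n^a$ bounds below $2^{-c_n}$, and charge the $C$-stage residue after the last $B$-stage to $p_n^a[r]$. Your explicit telescoping treatment of $p_n^a$ makes the absorption of intra-block $C$-stage reuse into the $q_n^a[y-1]$ term precise, where the paper phrases this step more loosely (it claims the $(s_i,s_{i+1})$-reuse is bounded by the increase in $p_n^a$ and then bounds the reuse at $s_i$ by $q_n^a[s_i-1]$, with the block-wide sum $\sum_{j>m_n[s_{i-1}]}$ in its key inequality implicitly doing the work that your potential-function argument spells out).
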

\begin{proof}
Let $(s_i)$ be the sequence of stages in
$[s,r]$  where
$m_n$ moves {\em and} an enumeration into $N_n^a$ occurs.
Note that $m_n$ may move without 
an enumeration into $N_n^a$ taking place.
Moreover, at each stage $s_i$, the construction sets
$p_n^a[s_i]=0$. We claim that it suffices to show the lemma for the special 
set of stages $(s_i)$. Indeed, by the construction, the
weight of the strings that are $A$-reused by $m_n$ during a stage in $(s_i, s_{i+1})$
is bounded by the increase in $p_n^a$.
Hence if we prove that at each stage $s_i$ the weight of the strings that are $A$-reused by $m_n$
and remain active at stage $r$ is bounded by $2^{-c_n[r]}$, we also have the result of the lemma for each stage in $[s,r]$.

In order to establish at each stage $s_i$ the bound $2^{-c_n[r]}$ for the
the weight of the strings that are $A$-reused by $m_n$
and remain active at stage $r$ we will follow the argument that was given
in the proof of  Lemma \ref{eq:bfolocoAnsk}.
Note that instead of $q_n, t_n$ we now have $q^a_n, t^a_n$ and instead of
the facts (\ref{eq:timirele}), (\ref{eq:tmovmonle}) we now have
(\ref{eq:timidrele}), (\ref{eq:tmovdmonle}) respectively.

By the hypothesis of the lemma, the parameter 
$c_n$ remains constant throughout the interval $[s,r]$.
At stage $s_i\in [s,r]$ marker $m_n$ moves. 
By the definition of stages $s_{i-1}, s_{i}$, no $N_n^a$ enumeration takes
place in the interval $(s_{i-1}, s_i)$, except perhaps for
the computations from clause \eqref{eq:timirele} of the construction.
Since $m_n$ did not move during the stages in $(s_{i-1}, s_i)$ for the sake of 
clause (b) of Section \ref{subse:lackunifsol}, it follows that
\begin{equation}\label{eq:mnpaypr2evdesc}
\sum_{j>m_n[s_{i-1}]} 2^{-K(A \restr_j)[s]} < q_n[s]\ \ \ \textrm{for $s\in (s_{i-1}, s_i)$.}
\end{equation}
Note that when $m_n$ moves at stage $x+1$,  
the weight of the $U$-descriptions
that it reuses is at most
$\sum_{j>m_n[s_{i-1}]} 2^{-K(A \restr_j)[s_i-1]}$ (and not $\sum_{j>m_n[s_{i-1}]} 2^{-K(A \restr_j)[s_i]}$).
This happens because the construction first moves marker $m_n$ and then enumerates
additional computations in $M$. In other words, the descriptions that $m_n$ reuses at $s_i$ 
correspond to $M$-computations that occurred in the previous stages, not the $M$-computations
that may occur by the end of stage $s_i$. 
Hence by \eqref{eq:mnpaypr2evdesc}, 
the weight of the $U$-descriptions that are reused by $m_n$ at stage $s_i$
are bounded by $q^a_n[s_i-1]$, which is $2^{-K(t^a_n)[s_i-1]-c_n[s_i-2]}$.

Now let us consider the overall effect of the movement of $m_n$ 
during the stages $(s_i)$.
If at least one of the descriptions in $U$ that $m_n$ 
reused at some stage  $s_i$
continues to be active at stage $r$,
then  $A_{s_i}\restr_{m_n[s_i-1]+1}=A_r\restr_{m_n[s_i-1]+1}$.
By (\ref{eq:timidrele}), 
under the same assumptions this implies
\[
A_{s_i}\restr_{t^a_n[s_i-1]+1}=A_r\restr_{t^a_n[s_i-1]+1}.
\]
By ($\ast$) of the construction 
(i.e.\ the enumeration of a computation
in $N^a_i$ upon the movement of a marker), 
since at stage $s_i$ the marker $m_n$ moved, we have
$t^a_n[s_i]=t^a_n[s_i-1]+1$.
Hence by (\ref{eq:tmovdmonle}) we get that
\[
t^a_n[y]\geq t^a_n [s_i]>t^a_n[s_i-1] \ \ \ \textrm{for all $y\in [s_i, r]$.}
\]
The above observation along with the bound that we established 
in the previous paragraph
on the weight of the $U$-descriptions that are reused by $m_n$ at  a stage in
$[s, r]$ , imply the following fact.
\begin{equation*}
\parbox{10.7cm}{At the stages $(s_i)$
the weight of the descriptions in $U$
 that are $A$-used due to $m_n$ 
and remain active at stage $r$,  
are bounded by $2^{-K(t^a_n)-c_n}$,
where $t^a_n$ is larger and larger and $c_n$ remains equal to $c_n[s]$}
\end{equation*}
(while the Kolmogorov function follows its usual approximation). 
More formally, $t^a_n[s_{i}-1]< t^a_n[y_{i}]$ and the
weight of $U$-descriptions that $m_n$ reuses at stage $s_i$ and remain active 
at stage $r$ is at most
$2^{-K(t^a_n)[s_i-1]-c_n[s]}$. So the total weight of the $U$-descriptions that $m_n$ uses
during the stages in $[s, r]$ and which remain active at stage $r$ is less than
\[
\sum_i 2^{-K(i)-c_n[s]}.
\]
Since the above sum is bounded by $2^{-c_n[s]}$, this
concludes the proof.
\end{proof}
\noindent
The same argument applies symmetrically to the strings that are $D$-used,
providing the bound $2^{-c_n[s]}+p_n^d[s]$.
\begin{lem}\label{eq:bfodlocoDnsk}
If during the interval of stages $[s,r]$ a marker $m_n$ is not injured
then the weight of the strings that are $D$-reused by $m_n$ during
this interval which remain active at stage $r$ is at most $2^{-c_n[s]}+p_n^d[s]$.
\end{lem}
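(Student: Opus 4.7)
The plan is to mirror the proof of Lemma~\ref{eq:bfodlocoAnsk} verbatim, swapping the roles of $(A,a)$ and $(D,d)$ throughout. The construction is symmetric in how it handles the two c.e.\ sets: the parameters $t^d_i$, $q^d_i$, $p^d_i$, $N^d_i$ satisfy the same book-keeping equations as $t^a_i$, $q^a_i$, $p^a_i$, $N^a_i$, and clause (c) of Section~\ref{subse:lackunifsol} plays exactly the role that clause (b) played in the $A$-side argument. The analogues of (\ref{eq:timidrele}) and (\ref{eq:tmovdmonle}) needed for the $D$-side have already been stated and justified, so every ingredient of the previous proof transfers directly.

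First, I would restrict attention to the subsequence of stages $(s_i)\subseteq [s,r]$ at which $m_n$ moves \emph{and} an enumeration into $N^d_n$ actually takes place (i.e.\ stages where clause (c) was the trigger, via the execution of $(\ast)$ with $(X,x)=(D,d)$). Exactly as in the $A$-side argument, between consecutive such stages $(s_i,s_{i+1})$ the weight of $U$-descriptions that $m_n$ $D$-reuses is recorded by the increment of $p^d_n$, which at stage $s_{i+1}$ is reset to $0$. Hence it suffices to bound, at each $s_i$, the weight of $D$-reused descriptions that remain active at $r$ by $2^{-c_n[s]}$; combined with the contributions absorbed into $p^d_n$ this gives the required bound $2^{-c_n[s]}+p^d_n[s]$.

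Next, at each such stage $s_i$ I would argue that since no $N^d_n$-enumeration happened in $(s_{i-1},s_i)$ beyond the bookkeeping in subroutine (\ref{eq:tsubrodouule}), we have $t^d_n[s_{i-1}]=t^d_n[s_i-1]$ and $c_n[s_{i-1}]=c_n[s_i-1]$, hence $q^d_n$ was constant in this range. Since $m_n$ did not move for the sake of clause (c) inside $(s_{i-1},s_i)$, the $D$-analogue of (\ref{eq:mnpaypr2evdesc}) gives that the weight of $U$-descriptions $D$-reused at stage $s_i$ is bounded by $q^d_n[s_i-1]=2^{-K(t^d_n)[s_i-1]-c_n[s]}$.

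Finally, the key monotonicity step: if at least one of those $D$-reused descriptions at $s_i$ remains active at $r$, then $D_{s_i}\restr_{m_n[s_i-1]+1}=D_r\restr_{m_n[s_i-1]+1}$, and by the $D$-version of (\ref{eq:timidrele}) this forces $D_{s_i}\restr_{t^d_n[s_i-1]+1}=D_r\restr_{t^d_n[s_i-1]+1}$. Since the execution of $(\ast)$ at $s_i$ bumps $t^d_n$ upward by at least one, (\ref{eq:tmovdmonle}) yields $t^d_n[y]>t^d_n[s_i-1]$ for all $y\in[s_i,r]$. So the values $t^d_n[s_i-1]$ across the stages $(s_i)$ are strictly increasing, and summing gives
\[
\sum_i 2^{-K(t^d_n)[s_i-1]-c_n[s]}\leq 2^{-c_n[s]}\sum_i 2^{-K(i)}\leq 2^{-c_n[s]}.
\]
Combined with the $p^d_n$ contributions between the $s_i$'s, this gives the claimed bound. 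There is no genuine new obstacle here; the only point that must be checked is that the non-uniformity between $A$ and $D$ does not break the strict-increase argument for $t^d_n$, and this is precisely what the separate thresholds $q^a_n,q^d_n$ and separate error terms $p^a_n,p^d_n$ were introduced to ensure.
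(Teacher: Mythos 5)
Your proposal is correct and matches the paper's approach exactly: the paper proves this lemma by remarking that the argument of Lemma~\ref{eq:bfodlocoAnsk} applies symmetrically with $(D,d)$ in place of $(A,a)$, and your write-up simply unfolds that symmetry argument in full detail, using the $D$-analogues of (\ref{eq:timidrele}), (\ref{eq:tmovdmonle}), clause (c), and $p^d_n$, $q^d_n$, $t^d_n$, $N^d_n$ in all the right places.
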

\noindent
Note that $p_n^a[s]\leq q_n^a[s]$ for each $n$ and all stages $s$.
This follows from clause (b)
in Section \ref{subse:lackunifsol}
and the fact that whenever $m_n$ moves
due to this clause (or is injured) parameter
$p_n^a$ takes value 0.
On the other hand by the definition of $q_n^a$ we have
$q_n^a[s]< 2^{-c_n[s]}$, so $p_n^a[s]\leq 2^{-c_n[s]}$.
Hence the bound in Lemma \ref{eq:bfodlocoAnsk} can be replaced with
$2^{-c_n[s]+1}$.
A similar argument applies to $p_n^d[s]$.
The proof of Lemma \ref{le:Mbcombound}
uses this observation in an adaptation of the proof of 
the analogous Lemma \ref{le:Mbound}.
 
\begin{lem}\label{le:Mbcombound}
The weight of the requests that are enumerated in $M_a$ is finite;
the same holds for $M_d$.
\end{lem}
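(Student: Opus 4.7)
The plan is to mimic the structure of the proof of Lemma \ref{le:Mbound}, run separately on each of $M_a$ and $M_d$, but with the bookkeeping modified to account for the parameters $p^a_n, p^d_n$ that absorb the non-uniformity between $A$- and $D$-motivated movements of the markers. By inequality \eqref{eq:Mwdbou} it suffices to show that $\mathtt{wgt}(S^a_k)$ decays geometrically in $k$, and symmetrically for $S^d_k$; the finiteness of $\mathtt{wgt}(M_a)$ and $\mathtt{wgt}(M_d)$ then follows by summing the resulting geometric series.

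First I would dispose of $k = 0, 1$ using $\mathtt{wgt}(U) < 2^{-2}$ and the inclusion $S^a_1 \subseteq S^a_0 \subseteq \mathrm{dom}(U)$. For the inductive step, I would argue exactly as in the proof of Lemma \ref{le:Mbound} that any string entering $S^a_k$ for $k > 1$ must previously have been moved from $S^a_1$ into $S^a_2$ by the action of some marker $m_z$ with index $z \geq k - 1$: when $m_z$ moves it injures every marker of larger index, so only markers of strictly smaller index can subsequently push the string deeper. Writing $S^{a,z}_k$ for the strings in $S^a_k$ that entered $S^a_1$ through $m_z$, the decomposition $S^a_k = \bigcup_{z \geq k-1} S^{a,z}_k$ reduces the task to showing $\mathtt{wgt}(S^{a,z}_2) \leq 2^{-z-2}$ uniformly in $z$.

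For this last estimate I would invoke Lemma \ref{eq:bfodlocoAnsk} together with the observation already flagged after its statement, namely that $p^a_n[s] \leq q^a_n[s] < 2^{-c_n[s]}$, so its bound can be strengthened to $2^{-c_n[s]+1}$. The only stages where strings can move from $S^{a,z}_1$ into $S^{a,z}_2$ are the injury stages $(s_j)_{j\geq 1}$ of $m_z$; at such a stage $c_z$ strictly increases by $1$, and the contribution of that stage to $\mathtt{wgt}(S^{a,z}_2)$ through strings still active at the end is bounded by $2^{-c_z[s_j-1]+1}$. Since $c_z[s_j - 1] = c_z[0] + (j-1) = z + 3 + j$, summing a geometric series in $j$ gives $\mathtt{wgt}(S^{a,z}_2) \leq 2^{-z-2}$, and then $\sum_{z \geq k-1} 2^{-z-2} = 2^{-k}$ gives the required decay of $\mathtt{wgt}(S^a_k)$. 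The argument for $M_d$ is obtained by interchanging $a$ and $d$ throughout.

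The main obstacle, already isolated in Section \ref{subse:lackunifsol}, is precisely the lack of uniformity: an $A$-motivated movement of $m_n$ reuses $D$-descriptions without triggering an enumeration into $N^d_n$, so the clean one-for-one correspondence between marker movements and $N^x_n$-enumerations that powers the single-set argument breaks. The parameters $p^x_n$ are the mechanism for tolerating this, but they force Lemma \ref{eq:bfodlocoAnsk} to carry an extra additive term, which effectively doubles the per-injury contribution. Compensating for this doubling is exactly why the initial constants are set to $c_i[0] = i + 4$ rather than $i + 3$; with that adjustment in place the geometric estimation proceeds in parallel with the proof of Lemma \ref{le:Mbound}.
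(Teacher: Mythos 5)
Your proof is correct and follows essentially the same route as the paper: it uses \eqref{eq:Mwdbou}, the decomposition $S^a_k = \bigcup_{z\geq k-1} S^a_k(z)$ (your $S^{a,z}_k$), the per-injury bound $2^{-c_z[s_j-1]+1}$ supplied by Lemma \ref{eq:bfodlocoAnsk} together with the observation $p^a_n \leq q^a_n < 2^{-c_n}$, and a geometric sum in $j$ calibrated by $c_z[0]=z+4$. You also correctly pinpoint that the extra $+1$ in the exponent, coming from the $p^a_n$ term, is precisely what the shift from $i+3$ to $i+4$ compensates for.
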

\begin{proof}
We give the proof for $M_a$; the proof for $M_d$ is entirely symmetric.
According to the correspondence between the domains of $U$ and $M_a$ that we discussed,
we can bound the weight of $M_a$ via \eqref{eq:Mwdbou}. Note that each description in $S^a_k$ is
counted $k+1$ times in this sum as it belongs to all $S^a_i, i\leq k$. 
So it suffices to show that
\begin{equation}\label{eq:dboundonsk}
\texttt{wgt}(S^a_k)< 2^{-k-1} \ \ \textrm{for each $k\geq 0$.}
\end{equation}
Since only strings in the domain of $U$ are used,
$\mathtt{wgt}(S^a_0)<2^{-2}$.
Since $S^a_1\subseteq S^a_0$, condition \eqref{eq:dboundonsk} holds for $k\leq 1$.  
Let $k>1$.
Every entry of a string into $S^a_{k}$ 
is due to a marker $m_x$ which $A$-reused it when it was already in
$S^a_{k-1}$. Since $k>1$, this string entered $S^a_{k-1}$ due to another marker 
$m_y$ with $y>x\geq 0$.
Inductively, that string entered $S^a_1$ due to a marker 
$m_z$ with $z\geq k-1$. 
Fix $z$, and let $S^a_k(z)$ contain the strings in $S^a_k$ that entered $S^a_1$
due to marker $m_z$. 
Then $S^a_k=\cup_{z\geq k-1} S^a_k(z)$
and $S^a_{k+1}(z) \subseteq S^a_k(z)$ 
for each $k>1$. Hence
\[
\texttt{wgt}(S^a_k(z))\leq \sum_{z\geq k-1} \texttt{wgt}(S^a_k(z))\ \ \textrm{for each $k>1$.}
\]
So in order to prove \eqref{eq:dboundonsk} for $k>1$ it suffices to show that
\begin{equation}\label{Szkboundfd}
\texttt{wgt}(S^a_k(z))< 2^{-z-2}\ \ \textrm{for each $z\geq 0$.} 
\end{equation}
Let $(s_i)$ be the increasing sequence of 
stages where $m_z$
is injured. Note that at this point we do not assume that
$(s_j)$ is a finite sequence.
We may count the weight of $S_k^a(z)$
by counting the weight of the bunches of descriptions that
enter in $S^a_1(z)$ and then  enter in $S^a_2(z)$
(necessarily by some $m_j$ with $j<z$). 
This is justified because every description that enters
$S_k^a(z)$ must have passed from $S_2^a(z)$ first.

Since the movement of a marker $m_i$ injures all $m_j, j>i$,
the only stages were strings move from 
$S^a_1(z)$ to $S^a_{2}(z)$
are the stages $(s_i)$.
Moreover 
since only active strings move from
$S^a_1(z)$ to $S^a_{2}(z)$ at stage $s_i$, according to Lemma \ref{eq:bfodlocoAnsk}
(and the observation straight after it)
their weight is bounded by $2^{-c_z[s_i-1]+1}$.
So the weight of the strings that enter
$S^a_{2}(z)$ from $S^a_1(z)$ is bounded above by $\sum_j 2^{-c_z[s_j-1]}$.
Since $c_z[s_{j+1}-1]=c_z[s_j]<c_z[s_{j}-1]$ for
all $j$,  this weight is bounded by $\sum_j 2^{-c_z[0]-j}=2^{-c_z[0]+1}$.
Since $c_z[0]=z+4$ this bound becomes
$2^{-z-2}$, which establishes \eqref{Szkboundfd} and concludes the proof.
\end{proof}
\noindent
We conclude with the proof that (\ref{eq:neduagreqkb}) is met.
\begin{lem}\label{le:micodaanv}
The following hold for each $i$.:
\begin{itemize}
\item there is an $M_a$-description of $B\restr_i$ of length $\leq K(A\restr_i)$;
\item there is an $M_d$-description of $B\restr_i$ of length $\leq K(D\restr_i)$.
\end{itemize}
\end{lem}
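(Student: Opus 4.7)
\emph{Proof proposal.} My plan is to argue by simultaneous induction on $i$ for all three clauses, following the structure of Lemma \ref{le:miconv} but in parallel for the two machines $M_a, M_d$ and the two families of auxiliary machines $N^a_j, N^d_j$. The base case is handled by the initialization at stage $0$; the bulk of the work is the inductive step.

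Assume all three clauses hold for $i$. I would fix a stage $s_0$ past which the following have all stabilized: the approximations to $A\restr_{i+1}$, $D\restr_{i+1}$ and $B\restr_{i+1}$, the values $K(A\restr_{i+1})$ and $K(D\restr_{i+1})$, the complexities $K_{M_a}(B\restr_i)$ and $K_{M_d}(B\restr_i)$, and the marker $m_i$. After $s_0$ no marker of index $\leq i$ ever moves, so $m_{i+1}$ is never injured and $c_{i+1}$ stays equal to some constant $c$. Exactly as in the proof of Lemma \ref{le:miconv}, the first branch of the construction (after the subroutine \eqref{eq:tsubrodouule}) competitively handles the placement of $m_{i+1}$ and the issuing of $M_a$- and $M_d$-descriptions of $B\restr_{i+1}$ of lengths at most $K(A\restr_{i+1})$ and $K(D\restr_{i+1})$ respectively. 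Within finitely many stages after $s_0$ all three tasks are accomplished, provided $m_{i+1}$ eventually settles.

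The crux is then to verify that $m_{i+1}$ reaches a limit. Clause (a) of Section \ref{subse:lackunifsol} can trigger at most once, since $\emptyset'$ is monotone. Suppose for contradiction that $m_{i+1}$ moves infinitely often after $s_0$; then one of (b), (c) triggers infinitely often, and by the symmetry between $A$ and $D$ it suffices to treat (b). Each such triggering invokes $(\ast)$ for $(X,x)=(A,a)$, enumerating an $N^a_{i+1}$-description of $A\restr_{t^a_{i+1}}$ of length $K(t^a_{i+1})+c$, which by the very definition of $t^a_{i+1}$ forces $t^a_{i+1}$ to strictly increase at the next stage. Combining this with \eqref{eq:tmovdmonle} and the fact that every initial segment $A\restr_n$ stabilizes (since $A$ is c.e.), we conclude that $t^a_{i+1}[s]\to\infty$, so that for every $n$ we eventually have $K_{N^a_{i+1}}(A\restr_n)\leq K(n)+c$. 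Since $N^a_{i+1}$ is a bona fide prefix-free machine by Lemma \ref{le:nibodund}, optimality of $U$ yields a constant $c'$ with $K(A\restr_n)\leq K(n)+c+c'$ for all $n$, contradicting the non-$K$-triviality of $A$. The symmetric argument with $D$ and $N^d_{i+1}$ handles the case where (c) triggers infinitely often.

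The main obstacle I would anticipate is the non-uniformity between the complexities of $A$ and $D$ flagged in Section \ref{subse:lackunifsol} and encoded by the parameters $p^a_i, p^d_i$. In contrast to the weight-bound estimates of Lemmas \ref{eq:bfodlocoAnsk}--\ref{le:Mbcombound}, which must explicitly absorb the corrections $p^a_n, p^d_n$, here the two non-$K$-triviality hypotheses cap clauses (b) and (c) \emph{independently}, so the convergence step decouples cleanly and goes through without any essential new complication beyond the single-machine case treated in Lemma \ref{le:miconv}.
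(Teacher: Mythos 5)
Your proposal is correct and takes essentially the same approach as the paper: a simultaneous induction, with convergence of $m_{i+1}$ secured by the failure of $N^a_{i+1}$ (resp.\ $N^d_{i+1}$) to witness $K$-triviality of $A$ (resp.\ $D$), handled independently for clauses (b) and (c). The only cosmetic difference is in framing: you argue by contradiction that infinitely many $(\ast)$-enumerations would force $t^a_{i+1}\to\infty$ and hence $K$-triviality, whereas the paper fixes the least witness $n_a$ in advance and observes that any further (b)-movement past a settling stage $s_1$ would produce an $N^a_{i+1}$-description of $A\restr_{n_a}$ of length $K(n_a)+c_{i+1}$, contradicting the choice of $n_a$ --- two contrapositive packagings of the same argument.
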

\begin{proof}
We argue by induction on $i$.
Suppose that the lemma holds for $i\in\Nat$. Then there is some stage
$s_0$ at which marker $m_i$ is defined and has stopped moving and for each
$(X,x)\in\{(A,a), (D,d)\}$
\begin{itemize}
\item the approximations to $X\restr_{i+1}, B\restr_{i+1}$, 
$K(X\restr_{i+1})$,  $K_{M_x}(B\restr_i)$  have settled;
\item $K_{M_x}(B\restr_i)[s_0]\leq K(X\restr_i)[s_0]$.
\end{itemize}
For each
$(X,x)\in\{(A,a), (D,d)\}$,
If $K_{M_x}(B\restr_{i+1})[s_0]> K(X\restr_{i+1})[s_0]$ the construction
at stage $s_0+1$ will enumerate
an $M_x$-computation that describes
$B\restr_{i+1}$ with a string of length $K(X\restr_{i+1})$.
\end{proof} 
\noindent
By Lemma \ref{le:micodaanva} and the construction we get
that the movement of the markers satisfies properties 
(i)-(v) of Section \ref{se:prothlowisa}.
Hence $\emptyset'\leq_T B$. We conclude the proof of Theorem
\ref{th:lowiscom}
by observing that (\ref{eq:neduagreqkb}) is met.
By Lemma \ref{le:micodaanv} the construction enumerates the required requests in $M_a$
which ask for a description of $B\restr_i$ 
with a string of length at most $K(A\restr_i)$, for each $i$.
Moreover the same holds for $D$ in place of $A$ and $M_d$ in place of $M_a$.
On the other hand Lemma \ref{le:Mbcombound} establishes that these request sets correspond to
prefix-free machine, via the Kraft-Chaitin lemma. Hence \eqref{eq:neduagreqkb} is met,
which concludes the verification of the construction
and the proof of Theorem \ref{subse:profthB}.

\section{Concluding remarks}\label{se:concrem}
We have demonstrated that computably enumerable sets can have a lot of information (for example, a solution to the
halting problem) yet have very simple initial segments. On the other hand, as we discussed, it is known that
such sets cannot have trivial initial segment complexity. In other words, their initial segments are more complex that
the initial segments of an infinite sequence of 0s. Our result has had numerous applications, which were discussed in
Section \ref{subse:applic}.

The methods that we used have novel features, but are not completely new. The bulk of the argument is depicted
in Figure \ref{fig:dynam2} which indicates the dynamic relationships between each pair of the three pairs from the following actions:
\begin{itemize}
\item[(a)] bound the complexity constructed set;
\item[(b)] challenge the non-triviality of the given set;
\item[(c)] code information into the constructed set.
\end{itemize}
After some abstraction, this type of argument can be found in other places in the recent literature (some times in simpler forms)
where a set with non-trivial algorithmic-theoretic complexity is given and one is required to construct a set with lesser complexity
which encodes certain kinds of information. 
Examples of such arguments can be found in \cite{BarmpaliasM09, Barmpalias:08, BarmpaliasCompress, omunca}.
However in the present paper we have made a conscious effort to explain the intuition and the dynamics of the argument
in concrete terms. Despite the common form of these arguments, however, each case has its own unique features that
stem from the particular measures of complexity that are involved. As an example in the $LK$-degrees, 
in \cite{Barmpalias:08} it was shown that
every non-zero $\Delta^0_2$ degree has uncountably many predecessors and 
in \cite{BarmpaliasCompress} it was shown that there are no
minimal pairs of $\Delta^0_2$ degrees. However, as we discussed, in the $K$-degrees every c.e.\ degree
has only countably many predecessors. Moreover, although we showed that there is no minimal pair of $K$-degrees of c.e.\ sets, 
the same question for $\Delta^0_2$ sets remains open.

%
%
\end{document}